\documentclass{article}

\usepackage[preprint]{neurips_2026}

\usepackage[utf8]{inputenc} 
\usepackage[T1]{fontenc}    
\usepackage{hyperref}       
\usepackage{url}            
\usepackage{booktabs}       
\usepackage{amsfonts}       
\usepackage{nicefrac}       
\usepackage{microtype}      
\usepackage{xcolor}         
\usepackage{array}
\usepackage{tabularx}
\usepackage{longtable}
\usepackage{placeins}
\usepackage{graphicx}
\usepackage{multirow}
\usepackage{siunitx}
\usepackage{caption}
\usepackage{subcaption}
\usepackage{amsmath,amssymb,amsthm,mathtools}
\usepackage{comment}
\newcommand{\NN}{\mathbb N}
\newcommand{\Nzero}{\mathbb N_0}
\newcommand{\PP}{\mathcal P}

\newcommand{\ind}[1]{\mathbf{1}_{\{#1\}}}

\newcommand{\EE}{\mathbb E}

\newcommand{\KL}{\mathrm{KL}}

\newcommand{\epower}{\widehat{\mathcal{P}}_M}

\newtheorem{theorem}{Theorem}[section]
\newtheorem{proposition}[theorem]{Proposition}
\newtheorem{lemma}[theorem]{Lemma}
\newtheorem{corollary}[theorem]{Corollary}
\theoremstyle{definition}
\newtheorem{definition}[theorem]{Definition}

\newtheorem{example}[theorem]{Example}

\title{Optimal sequential tests yield log-optimal e-processes}

\author{%
  Ashwin Ram\\
  Carnegie Mellon University\\
  Pittsburgh, PA 15213 \\
  \texttt{aram2@andrew.cmu.edu} \\
   \And
   Aaditya Ramdas\\
   Carnegie Mellon University \\
   Pittsburgh, PA 15213 \\
   \texttt{aramdas@cmu.edu} \\
}

\begin{document}

\maketitle

\begin{abstract}
    It has been recently shown that e-processes are sufficient for sequential testing in the following sense: every level-$\alpha$ sequential test can be obtained by thresholding an e-process at $1/\alpha$. However, in the above result, neither does the test have to be asymptotically optimal (in terms of stopping times) nor does the e-process have to be  asymptotically log-optimal. It has separately been shown that asymptotically log-optimal e-processes yield asymptotically optimal sequential tests. In this paper, we prove the converse, arguably completing the story: it is possible to aggregate asymptotically optimal sequential tests into asymptotically log-optimal e-processes. This is accomplished by using a new class of \emph{WAIT} e-processes: those that are \emph{Weighted Aggregates of Indicators of stopping Times} that begin at zero, are nondecreasing and increase to infinity under the alternative at the optimal rate.
    Importantly, the paper discusses several nuances in the varied definitions of asymptotic (log-)optimality. 
\end{abstract}

\section{Introduction}
Sequential tests are important because they provide the flexibility needed to adaptively stop a stream of data and reject the null hypothesis while uniformly controlling the Type-$I$ error at some specified rate $\alpha$. In its simplest form, a level-$\alpha$ sequential test is a stopping time $\tau_\alpha$ that satisfies $\sup_{P\in\PP}P(\tau_{\alpha}<\infty)\le \alpha$ for some null class $\PP$. The relationship between such tests and $e$-processes is an important question in the literature. Formally, an $e$-value is a nonnegative random variable with null expectation at most one, while an $e$-process is the sequential analogue and valid at any stopping time, for all $P\in\PP$. 

To this end, two key facts motivate our paper. First, it is known that thresholding any $\PP$-$e$-process $E=(E_t)$ at $1/\alpha$ will yield a level-$\alpha$ sequential test, thanks to Ville's inequality \citep{Ville1939}. Second, this representation is complete in the sense that any level-$\alpha$ sequential test can be recovered by thresholding some $e$-process at $1/\alpha$ \citep{RamdasWang2025}. However, this statement is still incomplete. Suppose the representing $e$-process is a two-valued indicator process. Then it will not necessarily have meaningful logarithmic growth under alternatives. In other words, on one hand, we have completeness of $e$-processes for validity. However, the question remains as to whether asymptotically optimal tests can be converted into asymptotically log-optimal evidence processes. 

In this paper, we therefore prove such a converse. Formally, suppose we are given only a level-indexed family of tests $(\tau_{\alpha_k})_{k\ge 1}$ with no pre-existing $e$-process. We will write $b_k:=\log(1/\alpha_k)$ and by definition note that $\sup_{P\in\PP}P(\tau_{\alpha_k}<\infty)\le \alpha_k$. Then for nonnegative weights $w_k$ that satisfy $\sum_k w_k\alpha_k\le 1$, we will define the weighted indicators of stopping times as
\[
M_t:=\sum_{k=1}^{\infty} w_k\ind{\tau_{\alpha_k}\le t}.
\]
We will refer to such processes as WAIT $e$-processes. These start at zero, are nondecreasing in time, and are valid $\PP$-$e$-processes (as we will prove). Importantly, their growth under an alternative $Q$ is governed by the deterministic cumulative weight profile $W(x):=\sum_{b_k\le x}w_k$. Suppose these tests are first-order optimal under $Q$ as in almost surely,
\[
\frac{\tau_{\alpha_k}}{b_k}\longrightarrow \frac{1}{I(Q)}.
\]
And further suppose that the profile satisfies
\[
\frac{\log W(x)}{x}\longrightarrow \rho.
\]
Then, under the alternative $Q$, we will have with probability one that
\[
\frac{\log M_t}{t}\longrightarrow \rho I(Q).
\]
Note that the constraint $\sum_{k}w_k\alpha_k\le 1$ implies that $W(x)\le e^x$. Thus, $\rho\le 1$. As a result, the WAIT aggregation is fully log-optimal whenever the levels and weights have the full profile exponent $\rho =1$. To this end, we will present a number of different level schedules where such a rate can be achieved. 

\subsection*{Related work}
The origins of such sequential tests go back to the most fundamental setting of a singleton null versus a singleton alternative. In this setting, it has been proven that likelihood-ratio thresholding is optimal in the sense of minimizing the expected sample size among all tests with the same error probabilities \citep{Wald1945, WaldWolfowitz1948}. A number of subsequent works then developed asymptotic and Bayes and minimax versions of this very principle, including a generalization of sequential probability ratio tests, asymptotically optimal adaptive sampling rules and sequential inference \citep{KeiferWeiss, chernoff1959, KiefferSacks}. These ideas were later refined for composite and multi-parameter problems: work was done in nearly optimal sequential tests for sets with a finite amount of parameters \citep{Lorden1977}, nearly optimal stopping rules and composite sequential tests \citep{lai1973, Lai1988}, systematized likelihood and boundary approximations \citep{Siegmund1986}. A number of further works then established first (and higher) order asymptotic optimality of methods like the mixture and generalized likelihood-ratio, going beyond independent observations to more generalized processes \citep{Dragalin1999, Fellouris2013}.

Separately, a number of works in the literature treat sequential testing as a testing-by-betting problem, with the linchpin of this viewpoint thanks to both Ville's work on non-negative martingales and Doob's optional stopping results \citep{Ville1939, doob1953stochastic}. In particular, Kelly and Breiman proved that for repeated favorable games, maximizing the logarithmic capital growth is the unique criterion for optimality in the limit \citep{kellyinterpretation, breiman1961optimal}. This growth-rate perspective was consequently developed in universal portfolio theory wherein mixtures lead to wealth processes whose logarithms will achieve the best strategy up to sublinear error \citep{cover1987log, cover1996}. Crucially, these ideas are the genesis for anytime-valid inference today. That is, a statistician will build a nonnegative capital process with controlled null expectation (thanks to optional stopping) and when the capital crosses $1/\alpha$, one rejects the null.

These two viewpoints are now linked with the work being done in both $e$-values and $e$-processes. A number of works have made fundamental contributions here: among others, calibration and multiple testing theory for $e$-values \citep{VovkWang2021}, developments in game-theoretic probability \citep{Shafer2021, RamdasGrunwaldVovkShafer2023}, exploiting nonnegative supermartingales to construct arbitrary confidence sequences (e.g.\ for all sample sizes, zero-width confidence intervals) \citep{HowardEtAl2021}, always-valid inference and more broadly safe anytime-valid inference \citep{johari2022, RamdasGrunwaldVovkShafer2023}. At the same time, there has been work in universal inference for obtaining finite-sample constructions in composite cases. Namely, split likelihood-ratio $e$-values \citep{wasserman2020universal}, development of growth-rate optimal $e$-variables in composite settings and then subsequent assumption-free strong duality for the growth rate under point alternatives \citep{GrunwaldDeHeideKoolen2024, LarssonRamdasRuf2025}. From these, a number of wonderful applications have emerged. For example, safe $e$-processes for exchangeability testing \citep{ramdas2022testing}, bounded-mean and empirical-Bernstein methods \citep{WaudbySmithRamdas2024}, sequential nonparametric change detection through $e$-detectors \citep{shin2024edetectors} which are provably optimal \citep{ram2026asymptoticallyoptimalsequentialchange}, sequential independence testing \citep{podkopaev2023}, exact sequential $k$-sample testing \citep{turner2024}. However, none of these works have analyzed the problem of aggregating sequential tests to log-optimal processes, which is the main contribution of our work.

\subsection*{Contributions}
We summarize our main contributions as follows:
\begin{enumerate}
    \item We introduce the concept of WAIT $e$-processes and prove their validity for any arbitrary composite nulls in full generality.
    \item We prove an exact deterministic profile theorem. That is, if the tests are first order optimal, the resulting $e$-process will have log-growth rate $\rho I(Q)$, with $\rho$ determined entirely by the level and weight profile. We valid these results empirically as well.
    \item We identify several full-rate $\alpha_k$ schedules and weights. These include log-corrected unweighted schedules and weighted geometrically space schedules.
    \item We cleanly present the first unification of several notions of optimality. These include almost-sure, in-probability, $L^{1}$, expectation (with concentration), and threshold-time optimality. In turn, we give intuition on exactly what assumptions are needed to convert optimal sequential tests (in any sense) to log-optimal $e$-processes. And in doing so, we prove (and validate empirically) why a very common notion of optimality in the testing literature (expectation optimality) on its own is not sufficient to aggregate into an optimal $e$-process. 
\end{enumerate}

The rest of this paper is organized as follows. In Section~\ref{sec:warmup}, we formally define the level-indexed sequential tests, the $\PP$-$e$-process, and the WAIT aggregate and profile. Then, in Section~\ref{sec:cute}, we give our general WAIT aggregation theorem, the full-profile schedules that yield log-optimal $e$-processes, and we show that the threshold tests derived from the full-profile WAIT processes are first-order optimal. In Section~\ref{sec:cuter}, we present the unweighted profile theorem and a weighted improvement of a schedule that would not normally deliver log-optimality in the aggregate. We relate several types of optimality in Section~\ref{sec:cutest}, as well as what notions of test optimality are needed to obtain a log-optimal aggregated $e$-process. We conclude in Section~\ref{sec:cutestest}. In Appendix~\ref{sec:exp1} and Appendix~\ref{sec:exp2} we provide experiments that validate our theoretical results. Further, all proofs of the results can also be found in the appendix.

\section{Problem setup and WAIT aggregation}\label{sec:warmup}

Throughout, we work on a filtered measurable space $(\Omega, \mathcal F, (\mathcal F_t)_{t\in\mathbb N_0})$. We say that the composite null is a family $\mathcal P$ of probability measures on $(\Omega, \mathcal F)$. We define level-indexed sequential tests as follows.

\begin{definition}
Suppose that $(\alpha_k)_{k\ge 1}$ is some decreasing sequence in $(0,1)$ with $\alpha_k\downarrow 0$. We will write $b_k:=\log(1/\alpha_k)$. For each $k\in\mathbb N$, let $\tau_k:=\tau_{\alpha_k}$ be an $(\mathcal F_t)$-stopping rule that takes values in $\mathbb N_0\cup\{\infty\}$ such that
\begin{equation}\label{eq:V}
\sup_{P\in\mathcal P} P(\tau_k<\infty)\le \alpha_k.
\end{equation}
\end{definition}

Note that this construction only needs to have tests at each of the $(\alpha_k)$ selected levels. Starting with a full family $\{\tau_\alpha\}_{0<\alpha<1}$, one can choose some level schedule $(\alpha_k)$ and then take $\tau_k:=\tau_{\alpha_k}$. Let us now define what we mean by a strong asymptotic rate at the chosen levels.

\begin{definition}
Take some particular alternative probability measure $Q$ and some constant $I\in(0,\infty)$. We say that the chosen level tests have a pathwise rate of $I$ under $Q$ if
\begin{equation}\label{eq:AO}
\frac{\tau_k}{b_k}\longrightarrow \frac{1}{I},
\end{equation}
$Q$ almost surely as $k\to\infty$.    
\end{definition}

We now define an $e$-process. 

\begin{definition}
We say that a nonnegative adapted process $M=(M_t)_{t\in\mathbb N_0}$ is a $\mathcal P$-$e$-process if $\EE_P[M_\sigma]\le 1$ for every $P\in\mathcal P$ and every finite $(\mathcal F_t)$-stopping time $\sigma$.
\end{definition}

Note that strict positivity here will come at no cost asymptotically. In particular, strict positivity is optional here, as several of our processes throughout this paper will start at $0$. However, to obtain a strictly positive $e$-process, one may choose $\eta\in(0,1)$ and define $M_t^{(\eta)}:=\eta + (1-\eta)M_t$. Then, $M^{(\eta)}$ is again a $\mathcal P$-$e$-process because we have for every finite stopping rule $\tau$ that
\[
\EE_P[M_\sigma^{(\eta)}]=\eta+(1-\eta)\EE_P[M_\sigma]\le \eta + (1-\eta) \cdot 1 = 1.
\]
And moreover, whenever $M_t\to\infty$, along a given path it holds that
\[
\frac{1}{t}\log M_t^{\eta}-\frac{1}{t}\log M_t \longrightarrow 0.
\]
Therefore, strict positivity can be obtained for free in the sense that it will not come at any asymptotic cost. Now, take some particular nonnegative weights $(w_k)_{k\ge 1}$ (not all zero) which satisfy
\begin{equation}\label{eq:W}
\sum_{k=1}^{\infty}w_k\alpha_k\le 1.    
\end{equation}

For some $t\in\mathbb N_0$, let us define the weighted aggregate as
\begin{equation}\label{eq:aggregate}
M_t:=\sum_{k=1}^\infty w_k \ind{\tau_k\le t}.
\end{equation}

For $x\ge 0$, let us define the cumulative weight profile as
\begin{equation}\label{eq:Wprofile}
W(x):=\sum_{k:b_k\le x}w_k.    
\end{equation}

Since $\alpha_k\downarrow 0$, we have that $b_k\uparrow \infty$. As such, for each finite $x$, there are only finitely many indices $k$ where $b_k\le x$. Therefore, $W(x)$ is always a finite sum. With this in mind, we now formalize that the weighted indicator aggregation is always an $e$-process.

\begin{proposition}\label{prop:eprocess-validity}
If \eqref{eq:V} and \eqref{eq:W} hold, then the process $M$ in \eqref{eq:aggregate} is a nondecreasing $\mathcal P$-$e$-process.
\end{proposition}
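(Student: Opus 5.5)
The plan is to verify the three defining properties in turn: $M$ is well defined and nonnegative, it is adapted and nondecreasing, and it has null expectation at most one at finite stopping times. The key observation is that each summand is bounded by the indicator $\ind{\tau_k<\infty}$, whose null probability is controlled by \eqref{eq:V}. Optional stopping is never needed.

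First, each term $w_k\ind{\tau_k\le t}$ is nonnegative, so the series in \eqref{eq:aggregate} is a well-defined $[0,\infty]$-valued random variable. For each fixed $k$, $\{\tau_k\le t\}\in\mathcal F_t$ because $\tau_k$ is a stopping time, so every partial sum is $\mathcal F_t$-measurable. Hence $M_t$, as a monotone limit of partial sums, is $\mathcal F_t$-measurable and $M$ is adapted. Monotonicity is immediate, since $t\le s$ implies $\ind{\tau_k\le t}\le \ind{\tau_k\le s}$ for every $k$, and the weights are nonnegative. Finiteness of $M_t$ almost surely under each $P\in\mathcal P$ will follow from the expectation bound below. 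If one insists on real values everywhere, one can note that $M$ may take the value $+\infty$ only on a $P$-null set, or adopt the extended-valued convention.

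For the main step, fix $P\in\mathcal P$ and a finite stopping time $\sigma$. Pointwise we have $\ind{\tau_k\le \sigma}\le \ind{\tau_k<\infty}$, because $\sigma<\infty$. Applying Tonelli (monotone convergence) to interchange sum and expectation, and then \eqref{eq:V} and \eqref{eq:W}, gives
\[
\EE_P[M_\sigma]=\sum_{k=1}^\infty w_k P(\tau_k\le\sigma)\le \sum_{k=1}^\infty w_k P(\tau_k<\infty)\le \sum_{k=1}^\infty w_k\alpha_k\le 1.
\]
Here $M_\sigma$ is a measurable random variable: on $\{\sigma=t\}$ it equals $M_t$, and $\sigma$ takes countably many values. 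Indeed the bound holds even with $M_\infty:=\lim_t M_t$ in place of $M_\sigma$, which is a stronger statement.

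No step is genuinely hard. The only points needing care are the exchange of the infinite sum with the expectation, which is justified by nonnegativity, and the possibility of infinite values, which is handled as described in the second paragraph.
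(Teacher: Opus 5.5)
Your proposal is correct and follows the paper's proof essentially step for step: adaptedness via monotone limits of $\mathcal F_t$-measurable partial sums, monotonicity from the indicators, then Tonelli together with $\{\tau_k\le\sigma\}\subseteq\{\tau_k<\infty\}$, \eqref{eq:V} and \eqref{eq:W}. Your extra remarks on possibly infinite values of $M_t$ and on the measurability of $M_\sigma$ address points the paper leaves implicit, and you handle them correctly.
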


We will now give a proposition showing that consistency will transfer from the test family.

\begin{proposition}\label{prop:consistency-transfer}
Assume that \eqref{eq:W} holds and suppose that for some index $k_0\in\mathbb N$,
\begin{equation}\label{eq:C}
Q(\tau_k<\infty \text{ for every }k\ge k_0)=1.  
\end{equation}
Assume also that $\sum_{k=k_0}^\infty w_k=\infty$. Then, $M_t\to\infty$ almost surely under $Q$ as $t\to\infty$.
\end{proposition}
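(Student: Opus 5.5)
The plan is a pathwise monotone-convergence argument on the $Q$-full-measure event from \eqref{eq:C}. Let
\[
\Omega_0:=\{\omega:\tau_k(\omega)<\infty \text{ for every } k\ge k_0\},
\]
so that $Q(\Omega_0)=1$ by assumption. We fix $\omega\in\Omega_0$ and argue deterministically along that path. Since every term $w_k\ind{\tau_k\le t}$ is nonnegative and nondecreasing in $t$, the path $t\mapsto M_t(\omega)$ is nondecreasing (as already noted in Proposition~\ref{prop:eprocess-validity}). Hence $\lim_{t\to\infty}M_t(\omega)$ exists in $[0,\infty]$, and it suffices to show this limit is infinite.

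For each $k\ge k_0$ we have $\tau_k(\omega)<\infty$, so $\ind{\tau_k(\omega)\le t}\uparrow 1$ as $t\to\infty$. By the monotone convergence theorem for series of nonnegative terms (Tonelli for counting measure),
\[
\lim_{t\to\infty}M_t(\omega)=\sum_{k=1}^\infty w_k\lim_{t\to\infty}\ind{\tau_k(\omega)\le t}\ \ge\ \sum_{k=k_0}^\infty w_k=\infty .
\]

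If one prefers to avoid monotone convergence, a finite-truncation version works equally well. Given $C>0$, choose $K\ge k_0$ with $\sum_{k=k_0}^K w_k>C$. Set $T(\omega):=\max_{k_0\le k\le K}\tau_k(\omega)$, which is finite. Then for every $t\ge T(\omega)$ we have $M_t(\omega)\ge\sum_{k=k_0}^K w_k>C$. Since this holds for every $C$ on every $\omega\in\Omega_0$, it follows that $M_t\to\infty$ $Q$-almost surely.

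There is no genuine obstacle here. The only point needing care is that \eqref{eq:W} plays no role in divergence; it is only needed for validity. One might also worry whether $M_t$ is finite for fixed $t$; this does not matter for the conclusion, since divergence to $+\infty$ is what is claimed, and the lower bound above suffices either way.
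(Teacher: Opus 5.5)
Your proposal is correct and matches the paper's proof: your finite-truncation argument, which takes $T(\omega)=\max_{k_0\le k\le K}\tau_k(\omega)$ on the full-measure event and bounds $M_t(\omega)\ge\sum_{k=k_0}^K w_k$ for $t\ge T(\omega)$, is exactly what the paper does. Your monotone-convergence version is an equally valid one-line repackaging of the same pathwise idea, and you are right that \eqref{eq:W} plays no role in the divergence.
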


Note that the cumulative weight profile can never grow faster than $e^x$. We will formalize that as follows.

\begin{lemma}\label{lem:profile-upper-bound}
Assume that \eqref{eq:W} holds. Then for every $x\ge 0$, $W(x)\le e^x$. Consequently, whenever the limit $\rho:=\lim_{x\to\infty}\frac{\log W(x)}{x}$ exists, $\rho\le 1$.
\end{lemma}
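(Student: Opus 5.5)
The plan is a one-line Markov-type comparison. For fixed $x\ge 0$, every index $k$ with $b_k\le x$ satisfies $\alpha_k=e^{-b_k}\ge e^{-x}$. Multiplying by the nonnegative weight $w_k$ and summing only over these indices, I would use \eqref{eq:W} and discard the remaining nonnegative terms:
\[
e^{-x}W(x)=\sum_{k:b_k\le x}w_k e^{-x}\le \sum_{k:b_k\le x}w_k\alpha_k\le \sum_{k=1}^\infty w_k\alpha_k\le 1 .
\]
Rearranging gives $W(x)\le e^x$. The sum defining $W(x)$ is finite, as noted before Proposition~\ref{prop:eprocess-validity}, so there are no convergence issues. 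If no index satisfies $b_k\le x$, then $W(x)=0$ and the bound is trivial.

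For the consequence, I would assume the limit $\rho=\lim_{x\to\infty}\log W(x)/x$ exists. Then $W(x)>0$ for all large $x$, and possibly $\rho=-\infty$, which is harmless. Taking logarithms in $W(x)\le e^x$ gives $\log W(x)/x\le 1$ for all such $x>0$. Passing to the limit yields $\rho\le 1$.

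There is no real obstacle here. The only point needing care is the convention for $\log 0$ when $W(x)=0$ at small $x$. I would handle it by interpreting the limit as being taken over large $x$, where $W(x)>0$ whenever $\rho$ is finite, or by allowing the value $-\infty$, which preserves the inequality.
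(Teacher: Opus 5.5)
Your proof is correct and is essentially the paper's argument: the paper writes $w_k = e^{b_k}w_k\alpha_k \le e^{x}w_k\alpha_k$ for each $k$ with $b_k\le x$ and sums, which is your inequality $w_k e^{-x}\le w_k\alpha_k$ multiplied through by $e^{x}$. The consequence $\rho\le 1$ follows by taking logarithms exactly as you do, and your remark about $\log 0$ at small $x$ is a harmless refinement the paper leaves implicit.
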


\section{The WAIT Aggregation Theorem}\label{sec:cute}

We now prove that the asymptotic logarithmic growth is determined by $W$.

\begin{theorem}\label{thm:main}
Assume that \eqref{eq:V}, \eqref{eq:W}, and \eqref{eq:AO} hold. Suppose that $M$ is the aggregate (as in \eqref{eq:aggregate}) and let $W$ be the profile (as in \eqref{eq:Wprofile}). Suppose in addition $W(x)\to\infty$ and for some $\rho\in[0,1]$ we have that
    \begin{equation}\label{eq:R}
    \frac{\log W(x)}{x}\longrightarrow \rho.   
    \end{equation}
    Then, almost surely under $Q$ as $t\to\infty$,
    \begin{equation}\label{eq:G}
    \boxed{
    \frac{1}{t}\log M_t\longrightarrow \rho I.  
    }
    \end{equation}
\end{theorem}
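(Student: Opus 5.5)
We fix an outcome $\omega$ in the $Q$-almost-sure event on which \eqref{eq:AO} holds, and argue deterministically along that path. The idea is to sandwich $M_t$ between two values of the profile. The lower argument is roughly $tI(1-\varepsilon)$ and the upper argument is roughly $tI(1+\varepsilon)$. Hypothesis \eqref{eq:R} then converts these bounds into the rate $\rho I$.

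\textbf{Step 1 (pathwise comparison of $\tau_k$ and $b_k$).} Fix $\varepsilon\in(0,1)$. By \eqref{eq:AO} there is a (path-dependent) $K=K(\omega,\varepsilon)$ such that for all $k\ge K$
\[
\frac{(1-\varepsilon)b_k}{I}\le \tau_k\le \frac{(1+\varepsilon)b_k}{I}.
\]
In particular $\tau_k<\infty$ for all $k\ge K$.

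\textbf{Step 2 (lower bound).} If $k\ge K$ and $b_k\le tI/(1+\varepsilon)$, then $\tau_k\le t$. Hence
\[
M_t\ \ge\ \sum_{k\ge K,\ b_k\le tI/(1+\varepsilon)} w_k\ \ge\ W\bigl(tI/(1+\varepsilon)\bigr)-C_K,
\]
where $C_K:=\sum_{k<K}w_k$ is a finite constant. Since $W(x)\to\infty$, for large $t$ the right side is at least $\tfrac12 W(tI/(1+\varepsilon))$. Taking logarithms and applying \eqref{eq:R} gives
\[
\liminf_{t\to\infty}\frac{\log M_t}{t}\ge \frac{\rho I}{1+\varepsilon}.
\]
This step needs $\rho$ to be finite, which holds since $\rho\le 1$ by Lemma~\ref{lem:profile-upper-bound}. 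When $\rho=0$ the lower bound is $0$, and it is still valid because $M_t\to\infty$ makes $\log M_t$ eventually positive.

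\textbf{Step 3 (upper bound).} If $k\ge K$ and $\tau_k\le t$, then $b_k\le tI/(1-\varepsilon)$. Therefore
\[
M_t\le C_K+W\bigl(tI/(1-\varepsilon)\bigr).
\]
The constant $C_K$ accounts for the finitely many early indices, whose stopping times are uncontrolled. Because $W\to\infty$, this is at most $2W(tI/(1-\varepsilon))$ for large $t$, so
\[
\limsup_{t\to\infty}\frac{\log M_t}{t}\le \frac{\rho I}{1-\varepsilon}.
\]
Letting $\varepsilon\downarrow 0$ along a countable sequence, with the null sets remaining null, yields \eqref{eq:G}.

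\textbf{Main obstacle.} The main point of care is handling the finitely many indices $k<K$ in both directions. In the lower bound we also need $M_t>0$ eventually so that the logarithm is defined; this follows from Step 2 because $W\to\infty$. Another subtlety is that $K$ depends on $\omega$. This is harmless because $C_K$ is finite for each path and the argument is purely pathwise.
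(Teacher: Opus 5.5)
Your proposal is correct and follows essentially the same argument as the paper: the pathwise sandwich $W(It/(1+\varepsilon)) - C \le M_t \le W(It/(1-\varepsilon)) + C$, where $C$ is the total weight of the finitely many early indices, then absorbing $C$ into factors of $2$ using $W\to\infty$, taking logarithms, invoking \eqref{eq:R}, and letting $\varepsilon\downarrow 0$ along a countable sequence. The only cosmetic difference is that you work directly on the single full-measure event where \eqref{eq:AO} holds, whereas the paper builds an event $\Omega_\varepsilon$ for each $\varepsilon$ and intersects over rational $\varepsilon$.
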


Intuitively, Theorem~\ref{thm:main} shows that once the tests reach an asymptotic speed of $b_k/I$, the aggregate will have a log growth rate that is simply the product of $I$ with the exponential growth rate of $W(x)$. Therefore, the entire problem becomes about building a valid schedule with a cumulative weight profile $W(x)$ that is as large as possible. However, recall that thanks to Lemma~\ref{lem:profile-upper-bound}, it is not possible to exceed $e^x$. Therefore, the maximum possible exponent in this indicator aggregation class is $1$.

\section{Profiles and Schedules}\label{sec:cuter}
Let us now specialize to the unweighted case where $w_k\equiv 1$. Then
\begin{equation}\label{eq:count}
B_t:=\sum_{k=1}^\infty \ind{\tau_k\le t}    
\end{equation}
is the counting process with the chosen level schedule. In this case, the cumulative weight profile will simply be the counting function
\begin{equation}\label{eq:counting-profile}
N(x):=\#\{k\in\NN: b_k\le x\}.    
\end{equation}

Now, because $b_k\uparrow \infty$, this equals the largest index $k$ for which $b_k\le x$. We will adopt the convention where $N(x)=0$ when no index exists. All of this therefore motivates the following corollary.

\begin{corollary}\label{cor:unweighted}
Assume that $\sum_{k=1}^{\infty}\alpha_k\le 1$ and suppose both \eqref{eq:V} and \eqref{eq:AO} hold. Let $B$ be defined by \eqref{eq:count}. And let $N$ be given by \eqref{eq:counting-profile}. Suppose that $N(x)\to\infty$ and that for some $\rho\in[0,1]$,
\[
\frac{\log N(x)}{x}\longrightarrow \rho.
\]
Then it follows that almost surely under $Q$ as $t\to\infty$,
\[
\frac{1}{t}\log B_t\longrightarrow \rho I.
\]
\end{corollary}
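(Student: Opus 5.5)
This is a direct specialization of Theorem~\ref{thm:main}, so the plan is to check that its hypotheses hold with the constant weights $w_k\equiv 1$ and then read off the conclusion.

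We set $w_k:=1$ for every $k\ge 1$. These weights are nonnegative and not all zero. Condition~\eqref{eq:W} becomes $\sum_k w_k\alpha_k=\sum_k\alpha_k\le 1$, which is assumed. Conditions~\eqref{eq:V} and~\eqref{eq:AO} are assumed directly. With these weights the aggregate~\eqref{eq:aggregate} is $M_t=\sum_k\ind{\tau_k\le t}=B_t$, as in~\eqref{eq:count}.

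Next we identify the profile. By~\eqref{eq:Wprofile}, $W(x)=\sum_{k:b_k\le x}1=\#\{k\in\NN:b_k\le x\}=N(x)$. This is finite because $b_k\uparrow\infty$, as noted after~\eqref{eq:Wprofile}. Hence the assumptions $N(x)\to\infty$ and $\log N(x)/x\to\rho\in[0,1]$ are exactly the hypotheses $W(x)\to\infty$ and~\eqref{eq:R} of Theorem~\ref{thm:main}. That theorem then gives $\frac1t\log B_t=\frac1t\log M_t\to\rho I$, $Q$-almost surely.

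There is no real obstacle here. The only point needing care is the convention $N(x)=0$ when no index satisfies $b_k\le x$, which makes $\log N(x)$ undefined for small $x$. This causes no trouble, because $N(x)\to\infty$ means $N(x)\ge 1$ for all large $x$, and the limit~\eqref{eq:R} only concerns large $x$. The same reasoning applies to $\log B_t$: under $Q$ it is eventually finite, because \eqref{eq:AO} forces $\tau_k<\infty$ for all large $k$ almost surely, which is already handled inside Theorem~\ref{thm:main}.

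By Proposition~\ref{prop:eprocess-validity}, $B$ is also a nondecreasing $\PP$-$e$-process, although the corollary does not require this.
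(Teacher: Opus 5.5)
Your proposal is correct and is the same argument as the paper's proof, which applies Theorem~\ref{thm:main} with $w_k\equiv 1$ and notes that $W(x)=N(x)$. Your remarks on checking \eqref{eq:W} and on $\log N(x)$ and $\log B_t$ being finite for large arguments are accurate; they spell out details the paper leaves implicit.
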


We will now present a lemma to invert the asymptotic relationship between the level scale $b_k$ and the counting profile $N(x)$. This lemma will be used to convert level schedules into concrete growth rates.

\begin{lemma}\label{lem:inversion}
Suppose that $(b_k)_{k\ge 1}$ is increasing with $b_k\to\infty$. And let $N(x):=\#\{k:b_k\le x\}$. Then the following two statements hold.
\begin{enumerate}
    \item If for some $c\in(0,\infty)$ that $b_k/(\log k)\to c$, then it follows that
    \[
    \frac{\log N(x)}{x}\longrightarrow \frac{1}{c}.
    \]
    \item If $b_k/(\log k)\to\infty$, then it follows that
    \[
    \frac{\log N(x)}{x}\longrightarrow 0.
    \]
\end{enumerate}
\end{lemma}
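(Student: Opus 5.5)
The key fact is that, because $(b_k)$ is increasing with $b_k\to\infty$, the counting function satisfies $N(x)=\max\{k: b_k\le x\}$ and $N(x)\to\infty$. Moreover, for every $x\ge b_1$,
\[
b_{N(x)}\le x< b_{N(x)+1}.
\]
(For non-strictly increasing $b_k$ the same holds with $N(x)$ defined as the largest index with $b_k\le x$.) The plan is to sandwich $x$ between consecutive terms of the sequence and transfer the asymptotics of $b_k/\log k$ to $x/\log N(x)$. Write $n=N(x)$. Taking logarithms (valid once $n\ge 2$) gives
\[
\frac{\log n}{b_{n+1}}<\frac{\log N(x)}{x}\le \frac{\log n}{b_n}.
\]
Both bounds are then evaluated as $x\to\infty$, which forces $n\to\infty$.

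For part 1, the upper bound satisfies $\log n/b_n\to 1/c$ directly from the hypothesis. For the lower bound, write
\[
\frac{\log n}{b_{n+1}}=\frac{\log(n+1)}{b_{n+1}}\cdot\frac{\log n}{\log(n+1)}.
\]
The first factor tends to $1/c$, and the second tends to $1$ since $\log(n+1)-\log n\to 0$ while $\log n\to\infty$. Squeezing gives the limit $1/c$.

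For part 2, the quantity is nonnegative once $N(x)\ge 1$. It is bounded above by $\log n/b_n$, which tends to $0$ because $b_k/\log k\to\infty$. Hence the limit is $0$.

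The only delicate points are bookkeeping:
\begin{itemize}
\item ensuring $N(x)\to\infty$, which holds because every $b_k$ is finite, so each fixed $k$ is eventually counted;
\item handling small $x$, where $N(x)\in\{0,1\}$ and $\log N(x)$ is $-\infty$ or $0$, which is irrelevant asymptotically;
\item the factor $\log n/\log(n+1)\to 1$.
\end{itemize}
The main obstacle, such as it is, is justifying the lower bound through the index shift $n\mapsto n+1$. That is handled by the ratio argument above.
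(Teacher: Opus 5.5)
Your proof is correct, but it takes a different route from the paper's.

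The paper fixes $\delta\in(0,c)$ and uses $(c-\delta)\log k\le b_k\le(c+\delta)\log k$ for $k\ge k_\delta$. It then counts indices directly to get
\[
e^{x/(c+\delta)}-k_\delta\le N(x)\le k_\delta+e^{x/(c-\delta)},
\]
and sends $\delta\downarrow 0$. Part 2 is handled the same way with $b_k\ge A\log k$ and $A\to\infty$.

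You instead use monotonicity to treat $N$ as the generalized inverse of $k\mapsto b_k$, so that $b_{N(x)}\le x<b_{N(x)+1}$. The hypothesis on $\log k/b_k$ then transfers along $n=N(x)\to\infty$. The only cost is the one-step index shift, which $\log n/\log(n+1)\to 1$ absorbs.

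Your version has three advantages:
\begin{itemize}
\item it is shorter and avoids the $\delta$-bookkeeping;
\item the same two-sided inequality handles both parts;
\item it makes explicit the inverse relationship the paper uses informally when it remarks that $N(b_k)=k$ gives the converse direction $b_k=\log k+o(\log k)$.
\end{itemize}

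The paper's counting argument has its own advantages. It never actually uses that $(b_k)$ is monotone, only that $b_k/\log k$ has the stated limit, so it applies to arbitrary sequences. It also yields explicit two-sided bounds on $N(x)$ in terms of $k_\delta$. Your sandwich, by contrast, genuinely relies on the ordering.

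One small bookkeeping point: dividing by $b_n$ and by $x$ requires $b_n>0$. This holds for all large $n$ because $b_k\to\infty$, and in the paper's setting $b_k=\log(1/\alpha_k)>0$ anyway.
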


We now present a quick corollary of rate classification for unweighted schedules.

\begin{corollary}\label{cor:classification}
Assume that $\sum_{k=1}^{\infty}\alpha_k\le 1$ and that \eqref{eq:V} and \eqref{eq:AO} both hold. Then the following statements hold.
\begin{enumerate}
    \item Suppose for some $c\in(0,\infty)$, $b_k=c\log k + o(\log k)$. Then almost surely under $Q$, we have
    \[
    \frac{1}{t}\log B_t\to \frac{I}{c}.
    \]
    \item If $b_k/(\log k)\longrightarrow \infty$, then almost surely under $Q$,
    \[
    \frac{1}{t}\log B_t\longrightarrow 0.
    \]
    \item Suppose that $b_k=\log k + o(\log k)$. Equivalently, suppose $\log(1/\alpha_k)=\log k + o(\log k)$. Then, the unweighted counting construction will achieve the full rate $I$. That is, we have almost surely under $Q$ that
    \[
    \frac{1}{t}\log B_t\longrightarrow I.
    \]
\end{enumerate}
\end{corollary}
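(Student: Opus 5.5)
Corollary~\ref{cor:classification} follows by combining Corollary~\ref{cor:unweighted} with Lemma~\ref{lem:inversion}. The only real work is checking the hypotheses of Corollary~\ref{cor:unweighted}.

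The unweighted choice $w_k\equiv 1$ satisfies \eqref{eq:W}, because $\sum_k\alpha_k\le 1$. Moreover, $N(x)\to\infty$ holds automatically: the $b_k$ are finite, so every index $k$ satisfies $b_k\le x$ once $x\ge b_k$, and there are infinitely many indices. The $b_k$ are increasing with $b_k\to\infty$, since $\alpha_k\downarrow 0$, so Lemma~\ref{lem:inversion} applies to them.

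\emph{Item 1.} The hypothesis $b_k=c\log k+o(\log k)$ is exactly the statement $b_k/\log k\to c$. Part 1 of Lemma~\ref{lem:inversion} then gives $\log N(x)/x\to 1/c$. To feed this into Corollary~\ref{cor:unweighted} we need $\rho:=1/c\in[0,1]$. Lemma~\ref{lem:profile-upper-bound}, applied with $w_k\equiv1$ so that $W=N$, gives $N(x)\le e^x$, hence $\rho\le 1$. This means $c\ge1$ is forced by the summability assumption, so no case is lost. Corollary~\ref{cor:unweighted} then yields $\frac1t\log B_t\to I/c$ almost surely under $Q$.

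\emph{Item 2.} Part 2 of Lemma~\ref{lem:inversion} gives $\log N(x)/x\to 0$. Taking $\rho=0$ in Corollary~\ref{cor:unweighted} gives the limit $0$.

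\emph{Item 3.} This is Item 1 with $c=1$. The equivalence stated there is just the definition $b_k=\log(1/\alpha_k)$.

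The main point needing care is the range check $\rho\le 1$ in Item 1, because Corollary~\ref{cor:unweighted} requires $\rho\in[0,1]$. This is handled entirely by Lemma~\ref{lem:profile-upper-bound}. As a sanity check, it is consistent with $\sum_k\alpha_k\le1$ being incompatible with $b_k\sim c\log k$ for $c<1$, since then $\alpha_k\approx k^{-c}$ and the series would diverge. Everything else is direct substitution.
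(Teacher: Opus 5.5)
Your proposal is correct and follows the paper's proof, which likewise obtains Items 1 and 2 by feeding the corresponding parts of Lemma~\ref{lem:inversion} into Corollary~\ref{cor:unweighted} and treats Item 3 as the case $c=1$. Your explicit checks that $N(x)\to\infty$ and that $\rho=1/c\le 1$ (via Lemma~\ref{lem:profile-upper-bound}) usefully fill in hypotheses the paper leaves implicit.
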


To construct these full-rate unweighted schedules, one can intuit the process as follows. Write $\alpha_k=e^{-b_k}$. Then the sufficient condition of $b_k=\log k + o(\log k)$ from the third statement of Corollary~\ref{cor:classification} is equivalent to taking $\alpha_k = e^{-h(k)}/k$ with $h(k)=o(\log k)$ up to some normalization constant. For an unweighted indicator aggregation to be valid, it suffices to make $\sum_k \alpha_k<\infty$ because one can rescale the levels by some constant to enforce the overall $\sum_k \alpha_k\le 1$. 

The question therefore becomes how to choose this $h(k)$ such that $h(k)\to\infty$ fast enough to keep $h(k)=o(\log k)$, while also being fast enough to make $\sum_{k=1}^\infty \frac{e^{-h(k)}}{k}<\infty$. Certainly, the examples $h(k)=p\log\log k$ with $p>1$ and $h(k)=\log\log k + 2\log\log \log k$ both satisfy these requirements.

If we further assume that $(\alpha_k)$ is strictly decreasing (which means that $(b_k)$ is strictly increasing) and the limit $\lim_{x\to\infty}\log N(x)/x$ exists, then one may wonder what happens in this case also. In the same setting of Corollary~\ref{cor:unweighted} where the full unweighted rate $I$ is achieved, this will be equivalent to $\log N(x)/x\to 1$. Therefore, when evaluating at $x=b_k$, since $N(b_k)=k$, $b_k=\log k + o(\log k)$ holds. With this intuition in mind, we now provide a full framework for achieving the full-rate in the unweighted case. 

\begin{proposition}\label{prop:general-h}
Assume that \eqref{eq:V} and \eqref{eq:W} hold. Take some particular index $k_0\ge 3$. Let $h:\{k_0, k_0+1, \dots\}\to (0,\infty)$ be nondecreasing and satisfy $h(k)=o(\log k)$ and $\sum_{k=k_0}^\infty e^{-h(k)}/k <\infty$. Define for $k\ge k_0$, $\alpha_k:=c_h\frac{e^{-h(k)}}{k}$, where
\[
c_h:=\left(\sum_{k=k_0}^\infty \frac{e^{-h(k)}}{k}\right)^{-1}.
\]
Also, define
\[
B_t^{(h)}:=\sum_{k=k_0}^\infty \ind{\tau_{\alpha_k}\le t}.
\]
And assume further that under $Q$ almost surely as $k\to\infty$,
\[
\frac{\tau_{\alpha_k}}{\log(1/\alpha_k)}\longrightarrow \frac{1}{I}.
\]
Then it follows that $B^{(h)}$ is an $e$-process. And, almost surely under $Q$, we have
\[
\boxed{
\frac{1}{t}\log B_t^{(h)} \longrightarrow I.
}
\]
\end{proposition}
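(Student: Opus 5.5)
The plan is to reduce Proposition~\ref{prop:general-h} to Proposition~\ref{prop:eprocess-validity} for validity and to Corollary~\ref{cor:classification}(3) (equivalently Corollary~\ref{cor:unweighted} combined with Lemma~\ref{lem:inversion}) for the growth rate. Reindex by $j=k-k_0+1$ so the schedule starts at index $1$. With unit weights $w_k\equiv 1$, condition \eqref{eq:W} becomes $\sum_{k\ge k_0}\alpha_k\le 1$. By the choice of $c_h$ this sum equals exactly $c_h\sum_{k\ge k_0} e^{-h(k)}/k=1$. Combined with \eqref{eq:V} at the levels $\alpha_k$, Proposition~\ref{prop:eprocess-validity} then gives that $B^{(h)}$ is a nondecreasing $\mathcal P$-$e$-process.

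Before invoking the growth results, I will check that the schedule is admissible: $\alpha_k\in(0,1)$, decreasing, and $\alpha_k\downarrow 0$. Since $h$ is nondecreasing and $1/k$ is strictly decreasing, $\alpha_k$ is strictly decreasing. It tends to $0$ because $e^{-h(k)}\le e^{-h(k_0)}$ and $1/k\to 0$. Each $\alpha_k<1$ because the $\alpha_k$ are positive and sum to $1$, with infinitely many positive terms. Consequently $b_k=\log(1/\alpha_k)$ is strictly increasing to $\infty$, and the assumed pathwise limit $\tau_{\alpha_k}/b_k\to 1/I$ is precisely \eqref{eq:AO} for this schedule.

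The key computation is the asymptotics of $b_k$:
\[
b_k=\log k + h(k) - \log c_h .
\]
Here $h(k)=o(\log k)$ by hypothesis, and $\log c_h$ is a constant, hence also $o(\log k)$. Therefore $b_k=\log k+o(\log k)$, and in the reindexed variable $\log j=\log k+o(1)$, so the same expansion holds. Corollary~\ref{cor:classification}(3) then yields $\frac1t\log B^{(h)}_t\to I$ almost surely under $Q$.

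Alternatively, one can argue directly. Lemma~\ref{lem:inversion}(1) with $c=1$ gives $\log N(x)/x\to 1$, so $N(x)\to\infty$, and Corollary~\ref{cor:unweighted} applies with $\rho=1$.

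There is no serious obstacle. The only points needing care are bookkeeping: the shift of index origin to $k_0$ must not affect the $o(\log k)$ asymptotics, and the nonstandard starting index must be compatible with the earlier results, which index from $1$. One could also note that the hypotheses $k_0\ge 3$ and $h>0$ play no essential role here beyond making examples like $h(k)=p\log\log k$ well defined.
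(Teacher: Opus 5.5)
Your proposal is correct and takes essentially the same route as the paper. Both show $\alpha_k$ is strictly decreasing, get validity from $\sum_{k\ge k_0}\alpha_k=c_h\sum_{k\ge k_0}e^{-h(k)}/k=1$ (the paper writes out the Tonelli argument of Proposition~\ref{prop:eprocess-validity} inline), expand $b_k=\log k+h(k)-\log c_h$ to get $b_k/\log k\to 1$, and reindex to apply Corollary~\ref{cor:classification}; the paper cites its first statement with $c=1$ instead of the third, which is the same result.
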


In this unweighted case, we summarize in the below table several schedules that can be chosen, whether a valid $e$-process can be extracted, and what can be said about log optimality. All the entries will be proven in the appendix. Note that all schedules are valid up to some normalization constant.

\begin{center}
\begin{tabular}{@{}llll@{}}
\toprule
Schedule $\alpha_k$ & Valid? & $\log(1/\alpha_k)$ scale & Resulting log-rate\\ \midrule
$2^{-k}$ & yes & $k\log 2$ & $0$\\
$k^{-(1+\varepsilon)}$ & yes for $\varepsilon>0$ & $(1+\varepsilon)\log k$ & $I/(1+\varepsilon)$\\
$1/[k(\log k)^p]$ & yes for $p>1$ & $\log k + p\log\log k$ & $I$\\
$1/[k\log k(\log\log k)^2]$ & yes & $\log k + \log\log k + 2\log\log\log k$ & $I$\\
\bottomrule
\end{tabular}    
\end{center}

\subsection*{Weighted Improvement}
It has been shown that we may recover the full rate $I$ with an unweighted count, assuming that we pick levels that decay like the product of $1/k$ with a subpolynomial correction factor. However, a more straightforward idea is to keep the levels $2^{-k}$ and only change the weights. We formalize that intuition as follows: a weighted construction that achieves the full rate.

\begin{proposition}\label{prop:weighted-dyadic}
Let us define for $t\in\mathbb N_0$ the aggregated process as
\(
M_t^{\mathrm{dy}}:=\frac{6}{\pi^2} \sum_{k=1}^\infty \frac{2^k}{k^2}\,\ind{\tau_{2^{-k}}\le t}.
\)
Assume in addition that almost surely under $Q$,
\[
\frac{\tau_{2^{-k}}}{k\log 2}\longrightarrow \frac{1}{I}.
\]
Then, $M^{\mathrm{dy}}$ is a $\mathcal P$-$e$-process. And almost surely under $Q$ we have that
\[
\frac{1}{t}\log M_t^{\mathrm{dy}}\longrightarrow I.
\]
\end{proposition}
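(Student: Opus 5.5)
The plan is to derive Proposition~\ref{prop:weighted-dyadic} directly from Proposition~\ref{prop:eprocess-validity} and Theorem~\ref{thm:main}, with levels $\alpha_k=2^{-k}$, $b_k=k\log 2$, and weights $w_k=\frac{6}{\pi^2}\frac{2^k}{k^2}$. The argument has three parts: validity, the rate hypothesis, and the profile exponent.

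\emph{Validity.} The levels $\alpha_k = 2^{-k}$ are strictly decreasing to $0$ in $(0,1)$. By the standing assumption \eqref{eq:V} on the tests at these levels, $\sup_{P}P(\tau_{2^{-k}}<\infty)\le 2^{-k}$. The weight constraint \eqref{eq:W} holds with equality:
\[
\sum_k w_k\alpha_k=\frac{6}{\pi^2}\sum_k k^{-2}=1.
\]
Proposition~\ref{prop:eprocess-validity} then shows that $M^{\mathrm{dy}}$ is a nondecreasing $\mathcal P$-$e$-process.

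\emph{Rate hypothesis.} Condition \eqref{eq:AO} is exactly the assumed limit $\tau_{2^{-k}}/(k\log 2)\to 1/I$, $Q$-almost surely.

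\emph{Profile.} It remains to compute $W(x)=\sum_{k\le x/\log 2}w_k$ and show $W(x)\to\infty$ and $\log W(x)/x\to 1$. Set $K(x)=\lfloor x/\log 2\rfloor$, so $K(x)\log 2\le x<(K(x)+1)\log 2$.

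For the lower bound, keep only the last term once $K\ge 1$:
\[
W(x)\ge \frac{6}{\pi^2}\frac{2^{K}}{K^2}.
\]
This gives
\[
\log W(x)\ge K\log 2-2\log K+\log(6/\pi^2)\ge x-\log 2-2\log(x/\log 2)+O(1).
\]
Hence $\liminf \log W(x)/x\ge 1$, and in particular $W(x)\to\infty$.

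For the upper bound, Lemma~\ref{lem:profile-upper-bound} gives $W(x)\le e^x$ directly. Alternatively, $\sum_{k\le K}2^k/k^2\le 2^{K+1}$ yields $\limsup\log W(x)/x\le 1$.

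Therefore $\rho=1$, and Theorem~\ref{thm:main} yields $\frac1t\log M^{\mathrm{dy}}_t\to I$ $Q$-almost surely.

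The only mildly delicate step is the lower bound. The polynomial factor $k^{-2}$ in the weights costs only $O(\log x)$ in $\log W(x)$, and the floor in $K(x)$ costs at most $\log 2$. Both are $o(x)$, so they are negligible after dividing by $x$. Everything else is direct verification of the hypotheses of the earlier results.
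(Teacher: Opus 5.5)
Your proof is correct and follows the paper's route. Validity comes from Proposition~\ref{prop:eprocess-validity} via $\sum_k w_k\alpha_k=\frac{6}{\pi^2}\sum_k k^{-2}=1$, the rate hypothesis is \eqref{eq:AO} with $b_k=k\log 2$, and $\log W(x)/x\to 1$ is fed into Theorem~\ref{thm:main}, using the same last-term lower bound $W(x)\ge\frac{6}{\pi^2}2^{K}/K^2$. The only difference is the upper bound: you use the budget bound $W(x)\le e^x$ of Lemma~\ref{lem:profile-upper-bound} (or $\sum_{k\le K}2^k/k^2\le 2^{K+1}$), whereas the paper uses the ratio estimate $a_{k-1}/a_k<4/5$ to get $S_n\le 6\cdot 2^n/n^2$. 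Your version is shorter and suffices for the exponent, while the paper's additionally pins down $\log W(x)=x-2\log x+O(1)$.
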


This weighted construction is the easiest way to fix the construction in \cite{ruf2023composite}, which in the unweighted case does not achieve log-optimality (as shown with the resulting log-rate of $0$ in the above table). In this case, one does not change the schedule, but the capital that is assigned to each level.

\section{Weaker Notions of Optimality}\label{sec:cutest}

Unfortunately, the almost sure rate of the tests is a fairly strong assumption, albeit it delivers optimality of the aggregate if $I$ is optimal. One may wonder if it is feasible to weaken this assumption to other notions. That will be the focus of this section, where we prove what weaker notions of test-time optimality are sufficient to achieving optimality of the aggregate. 

Throughout this section, we will consider some alternative $Q$ and some information rate benchmark $I=I(Q)\in(0,\infty)$ and levels $b_k=\log(1/\alpha_k)\to\infty$. We now relate various types of test-time optimality to the almost-sure one used throughout the paper.

\begin{definition}\label{def:test-optimality}
We say that a level-indexed family of sequential tests $(\tau_k)$ is one of four different things, defined as follows.
\begin{enumerate}
    \item $\tau_k$ has a \underline{$Q$-almost-sure rate} of $I$ if
    \[
    \frac{\tau_k}{b_k}\to \frac{1}{I} \text{ almost surely under $Q$.}
    \]
    \item $\tau_k$ has a  \underline{$Q$-probability rate} of $I$ if
    \[
    \frac{\tau_k}{b_k}\to \frac{1}{I}\text{ in probability under $Q$.}
    \]
    \item $\tau_k$ has an \underline{$L^{1}(Q)$-rate} of $I$ if
    \[
    \frac{\EE_Q\left[\left|\tau_k-\frac{b_k}{I}\right|\right]}{b_k}\to 0.
    \]
    \item $\tau_k$ has an \underline{expectation-rate} of $I$ if $\EE_Q[\tau_k]<\infty$ eventually and
    \[
    \frac{\EE_Q[\tau_k]}{b_k}\to\frac{1}{I}.
    \]
\end{enumerate}
\end{definition}
The implications between these are straightforward. Although $I$ can be any rate, suppose $I$ is optimal. Then, pathwise optimality clearly implies optimality in $Q$-probability. In addition $L^{1}$ optimality implies both $Q$-probability optimality (thanks to Markov's inequality, as shown later) and expectation optimality. Additionally, Definition~\ref{def:test-optimality} allows one to formally understand which notions weaker (than almost sure) can deliver optimality. To this end, suppose that $E=(E_t)$ is a $\PP$-$e$-process. Then, $E$ has an almost sure log-growth rate $L$ under $Q$ if $\log(E_t)/t\to L$ with probability one under $Q$. Similarly, it will have log-growth rate $L$ in $Q$-probability if $\log(E_t)/t\to L$ in $Q$-probability. 

Now, if $I(Q)$ is some benchmark for $Q$, then $E$ will be called almost surely log-optimal under $Q$ if it achieves an almost sure log-growth rate $I(Q)$. For some $\rho\in[0,1]$, if the rate is $\rho I(Q)$, then we call $E$ $\rho$-log-efficient. 

Having said this, we will define \underline{threshold-time} optimality as follows. 

\begin{definition}\label{def:threshold-time-optimality}
For some nonnegative process $E$, let us define its threshold rejection time as
\[
T_{\alpha}(E):=\inf\{t\in\mathbb N_0: E_t\ge 1/\alpha\}.
\]
If $E$ is a $\PP$-$e$-process, then it follows that $T_{\alpha}(E)$ is a level $\alpha$-sequential test. We say that the threshold tests achieve an \underline{almost sure} first-order rate $I$ if with probability one under $Q$ as $\alpha\downarrow 0$,
\[
\frac{T_{\alpha}(E)}{\log(1/\alpha)}\to \frac1I.
\]
The in-probability version is defined analogously as this.
\end{definition}

With this defined, note that log-growth will imply threshold-time optimality. We will formalize that as follows.

\begin{proposition}\label{prop:log-growth-to-threshold}
Suppose that $E=(E_t)$ is nondecreasing and finite valued. Set $T_{\alpha}(E):=\inf\{t:E_t\ge 1/\alpha\}$. Suppose for some $L\in(0,\infty)$ we have that almost surely under $Q$,
\[
\frac{\log E_t}{t}\to L.
\]
Then it follows that almost surely under $Q$ as $\alpha\downarrow 0$,
\[
\frac{T_{\alpha}(E)}{\log(1/\alpha)}\to \frac1L.
\]
If $\log(E_t)/t\to L$ in probability under $Q$, then this same threshold-time convergence will hold in $Q$-probability instead.
\end{proposition}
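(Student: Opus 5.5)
The plan is to prove the almost-sure statement pathwise by sandwiching $T_\alpha(E)$ between deterministic multiples of $\log(1/\alpha)$. Then I will deduce the in-probability version from the a.s. case via a subsequence argument.

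\emph{Almost-sure case.} Fix an $\omega$ in the $Q$-full set on which $\log(E_t)/t\to L$. Write $b=\log(1/\alpha)$ and fix $\varepsilon\in(0,L)$. Choose $t_0(\omega)$ such that for all $t\ge t_0$,
\[
(L-\varepsilon)t\le \log E_t\le (L+\varepsilon)t .
\]
Since $L>0$, $E_t\to\infty$, so $T_\alpha(E)<\infty$ for every $\alpha$.

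For the upper bound, consider $t\ge \max\{t_0, b/(L-\varepsilon)\}$. For such $t$ we have $\log E_t\ge b$, hence $E_t\ge 1/\alpha$. Therefore $T_\alpha(E)\le \max\{t_0,\lceil b/(L-\varepsilon)\rceil\}$. Dividing by $b$ and letting $b\to\infty$ gives $\limsup T_\alpha/b\le 1/(L-\varepsilon)$.

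For the lower bound, note first that $T_\alpha(E)\to\infty$ as $\alpha\downarrow0$. This is because $E$ is finite-valued, so $\max_{s\le t_0}E_s<\infty$, and once $1/\alpha$ exceeds this maximum we get $T_\alpha>t_0$. Then at $t=T_\alpha\ge t_0$ we have $b\le\log E_{T_\alpha}\le (L+\varepsilon)T_\alpha$, so $T_\alpha/b\ge 1/(L+\varepsilon)$. Letting $\varepsilon\downarrow0$ gives the a.s. limit $1/L$.

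Monotonicity of $E$ is not strictly needed here, but it makes $\{T_\alpha\le t\}=\{E_t\ge 1/\alpha\}$. That identity is convenient for the probability version.

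\emph{In-probability case.} This is the main subtlety: the index $\alpha$ is continuous, while convergence in probability is not pathwise. I would try a direct event argument using monotonicity. Fix $\varepsilon$, let $t_1=\lceil b/(L-\varepsilon)\rceil$, and let $t_2=\lfloor b/(L+\varepsilon)\rfloor$.

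First, $\{T_\alpha>t_1\}=\{E_{t_1}<e^b\}\subseteq\{\log E_{t_1}/t_1<L-\varepsilon\}$. This is because $b\le (L-\varepsilon)t_1$.

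Second, $\{T_\alpha\le t_2\}=\{E_{t_2}\ge e^b\}\subseteq\{\log E_{t_2}/t_2\ge L+\varepsilon\}$. This uses the fact that $b\ge (L+\varepsilon)t_2$ (assuming $t_2\ge1$).

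As $\alpha\downarrow0$, both $t_1,t_2\to\infty$, so each event has $Q$-probability tending to $0$ by the assumed convergence in probability. Hence $Q\bigl(t_2<T_\alpha\le t_1\bigr)\to1$. Since $t_1/b\to1/(L-\varepsilon)$ and $t_2/b\to1/(L+\varepsilon)$, shrinking $\varepsilon$ gives $T_\alpha/b\to1/L$ in $Q$-probability. The only care needed is the floor/ceiling bookkeeping and the case $E_t=0$, where $\log E_t=-\infty$ is absorbed into the bad events.
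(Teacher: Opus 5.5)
Your proposal is correct and follows the paper's proof. Both arguments sandwich $T_\alpha(E)$ pathwise between roughly $\lfloor b/((1+\eta)L)\rfloor$ and $\lceil b/((1-\eta)L)\rceil$. Your in-probability argument is the monotonicity reduction $\{T_\alpha>t\}=\{E_t<e^b\}$, $\{T_\alpha\le t\}=\{E_t\ge e^b\}$ that the paper only gestures at; you never use the subsequence argument announced in your first paragraph, which is just as well. The one small difference is your a.s. lower bound: you use finite-valuedness to force $T_\alpha\ge t_0$ and then evaluate $\log E$ at the hitting time, so the a.s. half needs no monotonicity, whereas the paper bounds $\sup_{t\le t_-(b)}\log E_t$ via monotonicity.
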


Proposition~\ref{prop:log-growth-to-threshold} allows us to conclude that under our assumptions in Theorem~\ref{thm:main}, the threshold tests generated by our WAIT $e$-process will satisfy almost surely under $Q$ that
\[
\frac{T_{\alpha}(M)}{\log(1/\alpha)}\to \frac{1}{\rho I}.
\]

Here, $T_{\alpha}(M):=\inf\{t:M_t\ge 1/\alpha\}$ is the threshold test and $M_t$ is the aggregate (the WAIT $e$-process). The takeaway is that the full-profile WAIT aggregates (with $\rho=1$) will give rise to first-order optimal tests at rate $I$.

Unfortunately, convergence in $Q$-probability will not be enough for the simultaneous control that we were able to obtain in the almost sure optimality case (and essential for the aggregation to work). However, if the tests are nested then this in probability convergence is enough. To elaborate, we will first define a nested level family.

\begin{definition}\label{def:nested}
We say that the particular tests we choose are nested if $j\le k$ implies that $\tau_j\le\tau_k$ pathwise on $\Omega$. Equivalently, for each particular $t$, the events $\{\tau_k\le t\}$ are decreasing in $k$.    
\end{definition}

One can immediately see that thresholded families of the form $\inf\{t\in \NN_0: S_t\ge \log(1/\alpha)\}$ are automatically nested in $\alpha$. Remarkably, for nested families, in probability optimality for sequential tests is enough to deliver optimality of the aggregate, an idea formalized as follows.

\begin{theorem}\label{thm:nested-inprob}
Assume that \eqref{eq:V}, \eqref{eq:W}, and the nested property (as in Definition~\ref{def:nested}) all hold. Assume further that $W(x)\to\infty$ and that \eqref{eq:R} holds with $\rho\in[0,1]$. Suppose now we have in $Q$-probability that
\[
\frac{\tau_k}{b_k}\longrightarrow \frac{1}{I}.
\]
Then, in $Q$-probability as $t\to\infty$ it follows that
\[
\frac{1}{t}\log M_t\longrightarrow \rho I.
\]
\end{theorem}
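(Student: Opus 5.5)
The plan is to sandwich $M_t$ between two values of the profile $W$ on high-probability events, using nestedness to reduce all simultaneous control to the behavior of a \emph{single} test. Fix $\varepsilon\in(0,1)$. By nestedness, for each $t$ the set $\{k:\tau_k\le t\}$ is an initial segment $\{1,\dots,K_t\}$ of indices. Hence $M_t=\sum_{k\le K_t}w_k$, where $K_t:=\max\{k:\tau_k\le t\}$ and $K_t=0$ if this set is empty. It therefore suffices to show that, with $Q$-probability tending to one, the level $b_{K_t}$ lies between $(1-\varepsilon)It$ and $(1+\varepsilon)It$, up to one index. Monotonicity of $W$ then converts this into bounds on $M_t$. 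Note that $K_t<\infty$ is not needed for the upper bound, since $M_t\le\sum_{k:\tau_k\le t}w_k$ regardless.

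\textbf{Lower bound.} Let $x=(1-\varepsilon)It$ and let $k^-(t)$ be the largest index with $b_k\le x$. By nestedness,
\[
M_t\ge \sum_{k\le k^-(t)}w_k=W(x)\quad\text{on the event }\{\tau_{k^-(t)}\le t\}.
\]
Since $k^-(t)\to\infty$ deterministically as $t\to\infty$, the in-probability hypothesis applies along this deterministic subsequence of indices. It gives $\tau_{k^-(t)}\le (1+\delta)b_{k^-(t)}/I\le(1+\delta)(1-\varepsilon)t\le t$ with probability tending to one, once $\delta$ is chosen small relative to $\varepsilon$. On that event, $\log M_t/t\ge \log W((1-\varepsilon)It)/t\to\rho(1-\varepsilon)I$ by \eqref{eq:R}, using $W\to\infty$. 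The case $\rho=0$ needs only $M_t\ge W(x)\ge 1$ eventually, which gives a liminf of $\log M_t/t$ at least $0$.

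\textbf{Upper bound.} Let $k^+(t)$ be the smallest index with $b_k>(1+\varepsilon)It$. By nestedness, on the event $\{\tau_{k^+(t)}>t\}$ no index $k\ge k^+(t)$ has fired. There $M_t\le \sum_{k<k^+(t)}w_k=W((1+\varepsilon)It)$. The in-probability rate gives $\tau_{k^+(t)}\ge(1-\delta)b_{k^+(t)}/I>(1-\delta)(1+\varepsilon)t>t$ with probability tending to one. Then $\log M_t/t\le \log W((1+\varepsilon)It)/t\to\rho(1+\varepsilon)I$. Combining the two bounds and letting $\varepsilon\downarrow0$ yields $Q(|\log M_t/t-\rho I|>\eta)\to0$ for every $\eta>0$.

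The main obstacle, and the reason nestedness is essential, is the upper bound. Without nesting, one would need \emph{every} $\tau_k$ with large $b_k$ to exceed $t$ simultaneously. This requires a union bound over infinitely many indices, which in-probability convergence cannot control; indeed, it is exactly what the almost sure assumption of Theorem~\ref{thm:main} supplies. With nesting, the single event $\{\tau_{k^+(t)}>t\}$ controls all larger indices at once, and similarly a single event controls the lower bound. The remaining details are routine: ensuring $k^\pm(t)$ are well defined, handling indices with $w_k=0$, and noting that $\log 0$ can occur only on events of vanishing probability.
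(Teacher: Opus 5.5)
Your proposal is correct and is essentially the paper's argument. Nestedness reduces the two bounds to the single events $\{\tau_{k^-(t)}\le t\}$ and $\{\tau_{k^+(t)}>t\}$. These are controlled by in-probability convergence along deterministic index sequences tending to infinity, and the profile limit \eqref{eq:R} then squeezes $\frac1t\log M_t$. The only difference is cosmetic: you evaluate $W$ at $(1\pm\varepsilon)It$ with an extra slack $\delta$, while the paper evaluates it at $It/(1\pm\varepsilon)$ and needs no $\delta$.
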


Even though in-probability is weaker certainly than our almost-sure requirement, in the sequential testing literature, expectation-optimality is most typical. Unfortunately, this alone will not be sufficient as pointed out in Example~\ref{ex:expectation-too-weak}. However, if expectation-optimality and concentration are assumed, then that does deliver log-optimality of the aggregate, formalized as follows.

\begin{corollary}\label{cor:expectation-plus-concentration}
Assume that \eqref{eq:V}, \eqref{eq:W}, and the nestedness property (as in Definition~\ref{def:nested}) all hold. Assume also that $W(x)\to\infty$ and that \eqref{eq:R} holds with $\rho\in[0,1]$. Also, suppose that one of the following conditions hold.
\begin{enumerate}
    \item We have $L^1$ optimality:
    \[
    \frac{\EE_Q\left[\left|\tau_k-\frac{b_k}{I}\right|\right]}{b_k}\longrightarrow 0.
    \]
    \item Alternatively, we have:
    \[
    \frac{\EE_Q[\tau_k]}{b_k}\longrightarrow\frac{1}{I}\quad\text{and}\quad \frac{\operatorname{Var}_Q(\tau_k)}{b_k^2}\longrightarrow 0.
    \]
\end{enumerate}
Then it follows that in $Q$-probability as $t\to\infty$,
\[
\frac{1}{t}\log M_t\longrightarrow\rho I.
\]
\end{corollary}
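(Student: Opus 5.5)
The plan is to reduce both conditions to the hypothesis of Theorem~\ref{thm:nested-inprob}. That theorem already contains all the aggregation work under \eqref{eq:V}, \eqref{eq:W}, nestedness, $W(x)\to\infty$ and \eqref{eq:R}. So it suffices to show that each of conditions 1 and 2 implies
\[
\frac{\tau_k}{b_k}\longrightarrow \frac1I \quad\text{in $Q$-probability.}
\]
Once this is established, Theorem~\ref{thm:nested-inprob} gives $\frac1t\log M_t\to\rho I$ in $Q$-probability directly.

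\textbf{Condition 1.} Fix $\varepsilon>0$. By Markov's inequality,
\[
Q\left(\left|\frac{\tau_k}{b_k}-\frac1I\right|>\varepsilon\right)
= Q\left(\left|\tau_k-\frac{b_k}{I}\right|>\varepsilon b_k\right)
\le \frac{\EE_Q\left[\left|\tau_k-\frac{b_k}{I}\right|\right]}{\varepsilon b_k}\longrightarrow 0.
\]
The only subtlety is that $\tau_k$ may take the value $\infty$. However, finiteness of the $L^1$ quantity for large $k$ forces $Q(\tau_k<\infty)=1$ eventually, so the event $\{\tau_k=\infty\}$ is $Q$-null and the bound above is legitimate.

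\textbf{Condition 2.} Finiteness of the variance eventually again gives $\tau_k<\infty$ $Q$-a.s. We use the triangle inequality
\[
\left|\frac{\tau_k}{b_k}-\frac1I\right|\le \frac{|\tau_k-\EE_Q\tau_k|}{b_k}+\left|\frac{\EE_Q\tau_k}{b_k}-\frac1I\right|.
\]
The second term is deterministic and tends to $0$ by the expectation-rate assumption. Hence for $k$ large it is below $\varepsilon/2$. Chebyshev's inequality then bounds the probability that the first term exceeds $\varepsilon/2$ by
\[
\frac{4\operatorname{Var}_Q(\tau_k)}{\varepsilon^2 b_k^2}\longrightarrow 0.
\]
Alternatively, $L^2$ convergence can be derived from the bias--variance decomposition $\EE_Q[(\tau_k-b_k/I)^2]/b_k^2\to0$, and one can then reduce to condition 1 via Cauchy--Schwarz.

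There is no real obstacle here, because all of the substantive work is in Theorem~\ref{thm:nested-inprob}. The only point needing care is the bookkeeping around possibly infinite $\tau_k$. The moment assumptions must be read as implying $Q(\tau_k<\infty)=1$ for all large $k$. Finitely many initial indices do not affect convergence in probability, so they can be ignored.
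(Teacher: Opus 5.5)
Your proposal is correct and follows the paper's proof essentially verbatim: Markov's inequality for condition 1, and for condition 2 absorbing the bias via $\bigl|\EE_Q[\tau_k]/b_k-1/I\bigr|\le\varepsilon/2$ and then applying Chebyshev; both cases then invoke Theorem~\ref{thm:nested-inprob}. Your extra remarks, that $Q(\tau_k<\infty)=1$ for all large $k$ and that condition 2 reduces to condition 1 via bias--variance and Cauchy--Schwarz, are correct additions that the paper leaves implicit.
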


Lastly, if we also have summable deviations, then that also gives us pathwise optimality. This is the strongest form of log-optimality for the aggregate.

\begin{corollary}\label{cor:summable-deviations}
Assume that for every rational $\varepsilon\in(0,1)$,
\[
\sum_{k=1}^\infty Q\left(\left|\frac{\tau_k}{b_k}-\frac{1}{I}\right|>\frac{\varepsilon}{I}\right)<\infty.
\]
Then, \eqref{eq:AO} holds. That is, the almost-sure growth that follows from \eqref{eq:G} of Theorem~\ref{thm:main} holds.
\end{corollary}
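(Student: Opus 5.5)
The plan is a direct Borel--Cantelli argument, applied once for each rational tolerance. Fix a rational $\varepsilon\in(0,1)$ and define the deviation events
\[
A_k(\varepsilon):=\left\{\left|\frac{\tau_k}{b_k}-\frac{1}{I}\right|>\frac{\varepsilon}{I}\right\}.
\]
By hypothesis $\sum_k Q(A_k(\varepsilon))<\infty$. The first Borel--Cantelli lemma then gives $Q(\limsup_k A_k(\varepsilon))=0$. In other words, $Q$-almost surely there is a random index $K(\omega,\varepsilon)$ such that for all $k\ge K$,
\[
\left|\frac{\tau_k}{b_k}-\frac1I\right|\le \frac{\varepsilon}{I}.
\]
One small point needs care here. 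On the event $\{\tau_k=\infty\}$ the deviation is infinite, so that event is contained in $A_k(\varepsilon)$. Hence the conclusion also shows that $\tau_k<\infty$ for all large $k$, $Q$-a.s., and nothing extra is needed for infinite values.

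Next I intersect over countably many tolerances. Let
\[
\Omega_0:=\bigcap_{\varepsilon\in\mathbb Q\cap(0,1)}\left(\limsup_k A_k(\varepsilon)\right)^c.
\]
This is a countable intersection of $Q$-full events, so $Q(\Omega_0)=1$. Fix $\omega\in\Omega_0$ and any real $\delta>0$, and choose a rational $\varepsilon<\min(1,\delta I)$. Then for all $k\ge K(\omega,\varepsilon)$ we have $|\tau_k/b_k-1/I|\le \varepsilon/I<\delta$. Therefore $\tau_k/b_k\to 1/I$ on $\Omega_0$, which is exactly \eqref{eq:AO}.

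Finally, with \eqref{eq:AO} established, the remaining hypotheses are those of Theorem~\ref{thm:main}: \eqref{eq:V}, \eqref{eq:W}, $W(x)\to\infty$ and \eqref{eq:R}. I read the corollary as implicitly assuming these, as the surrounding context does. Invoking Theorem~\ref{thm:main} then yields \eqref{eq:G}, namely $\frac1t\log M_t\to\rho I$ $Q$-almost surely. Note that no nestedness assumption is needed, since the almost-sure convergence of $\tau_k/b_k$ is obtained directly.

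There is no substantive obstacle in this argument. The only places requiring attention are the passage from rational $\varepsilon$ to arbitrary $\delta$, which is handled by the countable intersection, and the treatment of $\tau_k=\infty$ described above.
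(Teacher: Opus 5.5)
Your proposal is correct and matches the paper's proof: the first Borel--Cantelli lemma for each rational $\varepsilon$, then a countable intersection over rationals to obtain \eqref{eq:AO}, after which Theorem~\ref{thm:main} yields \eqref{eq:G}. Your extra remarks are sound refinements of details the paper leaves implicit: that $\{\tau_k=\infty\}\subseteq A_k(\varepsilon)$, the passage from rational to arbitrary tolerances, and that the remaining hypotheses of Theorem~\ref{thm:main} are tacitly assumed.
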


As remarked, it is important to conceptually see why on its own, the expectation optimality is too weak for the purposes of this work, as shown in Example~\ref{ex:expectation-too-weak} below. It formalizes why expectation optimality alone is not enough. In particular, $\EE_Q[\tau_k]/b_k\to 1/I$ does not imply in-probability optimality, almost sure optimality, or any deterministic log-growth of a WAIT aggregate. That is why, throughout this paper, any result that starts from mean optimality needs an additional assumption. This could be $L^{1}$ optimality, a normalized variance that vanishes, or any other assumption with such control over the aggregate.

\section{Conclusion}\label{sec:cutestest}
Prior literature has established that $e$-processes are sufficient for anytime-valid sequential testing in the sense that thresholding such a $\PP$-$e$-process at $1/\alpha$ gives a level-$\alpha$ sequential test. Conversely, thresholding an appropriate $e$-process allows recovery of any level-$\alpha$ sequential test. In spite of this, a major issue that remained is how to ensure that an asymptotically optimal family of sequential tests give an $e$-process that achieves asymptotically optimal logarithmic growth under any alternative $Q$. This issue is addressed by our paper. That is, we prove under an apt level-weight profile condition, an aggregation that preserves optimality is possible. In addition, we relate several notions of asymptotic optimality and clarify the conditions under which it is possible to achieve (almost-sure) optimal logarithmic-growth of the aggregated process. In doing so, we distinguish between validity, consistency, optimality at test-time, growth-rate optimality, and threshold-time optimality. To our knowledge, we are the first work to do so.

In spite of these novelties, there are numerous remaining open problems. For one, we identify the limit of $\log M_t/t$, but we do not give second order terms or nonasymptotic lower bounds on $M_t$ under the alternative. In addition, we assume that the input tests are optimal. Although in Appendix~\ref{sec:extremelycute}, we give sufficient conditions for obtaining optimal tests for some model, it is not obvious when such tests exist, as that is largely dependent on the choice of model. Thirdly, our results are log-optimal pointwise under each alternative $Q$. It would be interesting to extend these results to be simultaneously log-optimal over an entire class of alternatives $\mathcal Q$. With all this being said, our novel WAIT construction is an intuitive and model-independent way of converting optimal sequential tests to optimal sequential evidence processes. And in doing so, we clarify what assumptions are needed to go from different notions of test optimality to log-optimality of the resulting aggregated $e$-process. In doing so, we complete the narrative of the relationship between sequential tests and $e$-processes.

\bibliographystyle{unsrtnat}
\bibliography{references}

\appendix
\section{A counterexample to why expectation optimality alone is insufficient}
\begin{example}\label{ex:expectation-too-weak}
Consider any particular but arbitrarily chosen increasing sequence $b_k\uparrow\infty$ and any nonnegative weights $(w_k)$ that satisfy \eqref{eq:W}. Take a single null $\PP=\{P\}$. We will work on a filtered space with constant filtration. That is, for $t\in\NN_0$, $\mathcal F_t\equiv\sigma(Y,B_1,B_2,\dots)$. Here, $Y$ and $(B_k)$ are random variables that satisfy $Q(B_k=1)=1$ for all $k$, $P(B_k=1)=\alpha_k$ for all $k$, and 
\[
Q\left(Y=\frac{1}{2}\right)=Q\left(Y=\frac{3}{2}\right)=\frac12.
\]
Now, define
\[
\tau_k:=
\begin{cases}
\left\lceil \dfrac{Yb_k}{I}\right\rceil,&B_k=1,\\
\infty,&B_k=0.
\end{cases}
\]
Each $\tau_k$ is an $(\mathcal F_t)$-stopping rule and $P(\tau_k<\infty)=P(B_k=1)=\alpha_k$. Therefore the required size bound (i.e. \eqref{eq:V}) holds for the null family $\PP=\{P\}$. Under $Q$, $B_k=1$ almost surely for every $k$. Hence, $\tau_k=\lceil\frac{Yb_k}{I}\rceil$ almost surely under $Q$. Therefore it follows that almost surely under $Q$ we have
\[
\frac{\tau_k}{b_k}\longrightarrow \frac{Y}{I}.
\]
In addition,
\[
\frac{\EE_Q[\tau_k]}{b_k}=\frac{\EE_Q[Y]}{I}+O\left(\frac{1}{b_k}\right)\longrightarrow \frac{1}{I}.
\]
By definition, this family is expectation-optimal at rate $I$. However, note that $Y$ is nondegenerate. So $\tau_k/b_k$ does not converge in $Q$-probability to $1/I$. In any case, consider the corresponding aggregate
\[
M_t:=\sum_{k=1}^\infty w_k\ind{\tau_k\le t}.
\]
With this aggregate one has almost surely under $Q$ that
\[
M_t:=\sum_{k=1}^\infty w_k \ind{b_k\le It/Y}=W\left(\frac{It}{Y}\right).
\]
Consequently suppose that \eqref{eq:R} holds. Then it follows that with probability one under $Q$,
\[
\frac{1}{t}\log M_t\longrightarrow \frac{\rho I}{Y}.
\]
This is a random quantity. Therefore, expectation optimality is much much too weak to conclude \eqref{eq:G}, or any such deterministic time control.
\end{example}

\section{An Equivalent Version of the WAIT Aggregation Theorem}
Our almost-sure assumption is equivalent to tail-uniform optimality in $Q$-probability. Informally, it can be viewed as an alternate way to intuit our main results.

\begin{definition}\label{def:tail-prob-opt}
We will say that the level tests we choose are \underline{tail uniformly optimal} at rate $I$ in $Q$-probability if for every $\varepsilon\in(0,1)$ and every $\delta\in(0,1)$, there exists some $K=K(\varepsilon,\delta)$ such that
\[
Q\left(\frac{1-\varepsilon}{I} b_k\le\tau_k\le\frac{1+\varepsilon}{I}b_k\text{ for all } k\ge K \right)\ge 1-\delta.
\]
\end{definition}

The reason this above tail uniform optimality can be viewed as equivalent to almost sure convergence can be argued as follows. For the implication from almost sure convergence to tail uniform optimality, it is immediate by continuity from below. That is, for all $\varepsilon$, the events
\[
A_K^\varepsilon:=\left\{\frac{1-\varepsilon}{I}b_k\le \tau_k\le\frac{1+\varepsilon}{I}b_k\text{ for all }k\ge K \right\}
\]
will increase to an event of probability one. Conversely, if tail uniform optimality holds, then for all $\varepsilon$, we have that $Q\left(\bigcup_{K=1}^{\infty} A_{K}^{\varepsilon}\right)=1$. If we now intersect over all rationals $\varepsilon\in(0,1)$ we have that $\tau_k/b_k\to 1/I$ almost surely under $Q$.

We are now ready to present the theorem we have been building towards: an analogue of Theorem~\ref{thm:main}.

\begin{theorem}\label{thm:main-prob}
Assume that \eqref{eq:V}, \eqref{eq:W}, and Definition~\ref{def:tail-prob-opt} all hold. Suppose that $M$ is the aggregate (as in \eqref{eq:aggregate}). Further suppose that $W(x)\to \infty$ and that \eqref{eq:R} holds for some $\rho\in[0,1]$. Then it follows that in $Q$-probability as $t\to\infty$,
\[
\frac{1}{t}\log M_t\longrightarrow \rho I.
\]
\end{theorem}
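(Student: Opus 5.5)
The quickest route is through the almost-sure result. By the discussion right before the statement, tail uniform optimality (Definition~\ref{def:tail-prob-opt}) is equivalent to $\tau_k/b_k\to 1/I$ $Q$-almost surely. Intersecting the events $\bigcup_K A_K^\varepsilon$ over rational $\varepsilon$ gives exactly \eqref{eq:AO}. Theorem~\ref{thm:main} then yields $\frac1t\log M_t\to\rho I$ $Q$-almost surely, and almost-sure convergence implies convergence in $Q$-probability. This proves the claim.

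To keep the statement self-contained, I would also give a direct proof that mirrors the almost-sure argument on a high-probability event. Fix $\varepsilon\in(0,1)$ and $\delta\in(0,1)$, and take $K=K(\varepsilon,\delta)$ so that $Q(A_K^\varepsilon)\ge 1-\delta$. On $A_K^\varepsilon$ I would sandwich $M_t$ for every $t$ as follows.

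For the lower bound: if $b_k\le It/(1+\varepsilon)$ and $k\ge K$, then $\tau_k\le t$. Hence
\[
M_t\ge W\bigl(It/(1+\varepsilon)\bigr)-W(b_{K}),
\]
where $W(b_K)$ is a deterministic finite constant.

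For the upper bound: if $\tau_k\le t$ and $k\ge K$, then $b_k\le It/(1-\varepsilon)$. Hence
\[
M_t\le W\bigl(It/(1-\varepsilon)\bigr)+\sum_{k<K}w_k .
\]
The second term is again a finite constant.

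Next I would apply \eqref{eq:R} together with $W(x)\to\infty$. This gives $\log W(cx)/x\to c\rho$, and shows that the additive constants are negligible after taking logarithms and dividing by $t$. (If $\rho=0$, the lower bound only needs $W\to\infty$, so that $M_t\ge1$ eventually and $\log M_t\ge 0$.) It follows that there is a deterministic $t_0$ such that, for $t\ge t_0$, on $A_K^\varepsilon$,
\[
\frac{\rho I}{1+\varepsilon}-\varepsilon\le \frac1t\log M_t\le \frac{\rho I}{1-\varepsilon}+\varepsilon .
\]
Consequently,
\[
\limsup_t Q\bigl(|\tfrac1t\log M_t-\rho I|>\eta(\varepsilon)\bigr)\le\delta,
\]
where $\eta(\varepsilon)\to0$ as $\varepsilon\to0$. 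Letting $\delta\to0$ and then $\varepsilon\to0$ gives convergence in probability.

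The main obstacle is the lower bound. The subtracted constant must not overwhelm $W(It/(1+\varepsilon))$. This is handled because $W\to\infty$ and $K$ is fixed before $t\to\infty$, so the uniform-in-$k$ control must come from the tail event $A_K^\varepsilon$ rather than from separate marginal in-probability statements.
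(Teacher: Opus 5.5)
Your proposal is correct. Your primary argument differs from the paper's proof; your backup direct argument is essentially the paper's.

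\textbf{Your primary route.} You turn Definition~\ref{def:tail-prob-opt} back into \eqref{eq:AO}. The events $A_K^\varepsilon$ increase in $K$, so $Q(\bigcup_K A_K^\varepsilon)=1$, and intersecting over rational $\varepsilon$ gives \eqref{eq:AO}. You then invoke Theorem~\ref{thm:main} and note that almost-sure convergence implies convergence in probability.

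\textbf{The paper's route.} The paper reruns the sandwich argument of Theorem~\ref{thm:main} directly on the event $A_{\varepsilon,\delta}$, which has probability at least $1-\delta$. It uses that $K$, $C=\sum_{k<K}w_k$ and the cutoff $t_2$ are all deterministic. It concludes $Q\bigl(|\frac1t\log M_t-\rho I|>\eta\bigr)\le\delta$ for every $t\ge t_2$.

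\textbf{What each buys.} Your reduction is shorter. It also shows the conclusion actually holds almost surely, making explicit that Theorem~\ref{thm:main-prob} is a weakening of Theorem~\ref{thm:main}. The paper's direct argument isolates a purely probabilistic squeeze: a deterministic sandwich for $M_t$ that holds with probability at least $1-\delta$ for all large $t$. That squeeze is reused in Theorem~\ref{thm:nested-inprob}. There the sandwich events come from nestedness plus marginal convergence in probability, and no reduction to the almost-sure theorem is available.

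\textbf{Your backup argument.} Your ``self-contained'' argument tracks the paper's proof step for step. The only difference is that you subtract $W(b_K)$ rather than $\sum_{k<K}w_k$ in the lower bound. This is cruder but still valid, because $b_k$ is nondecreasing and so $W(b_K)\ge\sum_{k\le K}w_k$.
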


Note that this Theorem~\ref{thm:main-prob} is the probability analogue of Theorem~\ref{thm:main}. In particular, it necessitates that we must have simultaneous control of all large enough levels on a high probability event. Although it concludes a weaker assumption, we will be using the techniques of this proof for our assumption-free in-probability proof.

\section{Omitted Proofs for $e$-process validity and WAIT comparison lemmas}
\begin{proof}[Proof of Proposition~\ref{prop:eprocess-validity}]
Take some $t\in\mathbb N_0$. For each $k$, the event $\{\tau_k\le t\}$ belongs to $\mathcal F_t$ since $\tau_k$ is a stopping time. Therefore, the random variable $w_k\ind{\tau_k\le t}$ is $\mathcal F_t$ measurable. For each $n\in\NN$, let us now define the partial sums
\[
M_t^{(n)}:=\sum_{k=1}^n w_k \ind{\tau_k\le t}.
\]
By definition, each $M_t^{(n)}$ will be $\mathcal F_t$ measurable. And as $n\to\infty$, $M_t^{(n)}\uparrow M_t$ pointwise by positivity of all summands. Hence, $M_t$ is $\mathcal F_t$ measurable, namely since it is the monotone limit of $\mathcal F_t$-measurable random variables. Now, if $s\le t$, then for every $k$, $\ind{\tau_k\le s}\le \ind{\tau_k\le t}$. If we multiply by $w_k\ge 0$ and then sum over all $k$, this gives $M_s\le M_t$, entailing that the process is non-decreasing. Now, let us have $\sigma$ be any finite stopping rule and consider some $P\in\PP$. Since all summands are nonnegative, thanks to Tonelli's theorem we have
\[
\EE_P[M_\sigma]=\EE_P\left[\sum_{k=1}^{\infty}w_k\ind{\tau_k\le \sigma}\right]=\sum_{k=1}^{\infty}w_k P(\tau_k\le\sigma).
\]
Since $\sigma$ is almost surely finite, for each $k$ we have that $\{\tau_l\le\sigma\}\subseteq \{\tau_k<\infty\}$. Therefore we get that $P(\tau_k\le \sigma)\le P(\tau_k<\infty)$. By \eqref{eq:V} and \eqref{eq:W}, we thus get
\[
\EE_P[M_{\sigma}]\le \sum_{k=1}^{\infty}w_k P(\tau_k<\infty)\le \sum_{k=1}^{\infty}w_k\alpha_k\le 1.
\]
This completes our proof.
\end{proof}

\begin{proof}[Proof of Proposition~\ref{prop:consistency-transfer}]
Denote $A:=\{\tau_k<\infty,\, \forall k\ge k_0\}$. By assumption, $Q(A)=1$ then. Now consider some $\omega\in A$. For each $n\ge k_0$ let us define $T_n(\omega):=\max_{k_0\le k\le n}\tau_k(\omega)$. Notice how only finitely many terms are involved and each $\tau_k(\omega)$ is finite on $A$. Therefore, it follows that the maximum, i.e.\ $T_n(\omega)$ is finite. Now, suppose that $t\ge T_n(\omega)$. It follows then that for every $k\in\{k_0, \dots, n\}$ we have that $\tau_k(\omega)\le t$. Therefore,
\[
M_t(\omega)=\sum_{k=1}^{\infty} w_k\ind{\tau_k(\omega)\le t}\ge \sum_{k=k_0}^n w_k.
\]
Here, the right hand side depends on $n$ rather than $t$. And by assumption as $n\to\infty$, $\sum_{k=k_0}^n w_k\to\infty$. So, for every $L>0$ there exists some $n$ such that $\sum_{k=k_0}^n w_k>L$. And then for all $t\ge T_n(\omega)$, we have $M_t(\omega)>L$. Therefore, $M_t(\omega)\to\infty$ as $t\to\infty$ since $L>0$ is arbitrary, as was $\omega\in A$. Thus this holds on all of $A$, which has $Q$ probability one, completing the proof.
\end{proof}

\begin{proof}[Proof of Lemma~\ref{lem:profile-upper-bound}]
Consider some $x\ge 0$. If $b_k\le x$, then $w_k=e^{b_k}w_k\alpha_k\le e^{x}w_k\alpha_k$. If we then sum over all indices with $b_k\le x$, we have
\[
W(x)=\sum_{b_k\le x}w_k\le e^x\sum_{b_k\le x}w_k\alpha_k\le e^{x}\sum_{k=1}^{\infty}w_k\alpha_k\le e^x.
\]
This proves our first claim. The second follows from taking logarithms and then dividing by $x$.
\end{proof}

\begin{proof}[Proof of Theorem~\ref{thm:main}]
In proving this theorem, we will first prove the following two statements.

\begin{enumerate}
    \item For every $\varepsilon\in(0,1)$ there exists an almost sure event $\Omega_\varepsilon$ such that for every $\omega\in\Omega_\varepsilon$ there is an integer $K_\varepsilon(\omega)$ for which for all $k\ge K_\varepsilon(\omega)$,
    \begin{equation}\label{eq:sandwich-tests}
    \frac{1-\varepsilon}{I}b_k\le \tau_k(\omega)\le \frac{1+\varepsilon}{I}b_k. 
    \end{equation}
    \item Take some $\varepsilon\in(0,1)$ and $\omega\in\Omega_\varepsilon$. Set $C_\varepsilon(\omega):=\sum_{k=1}^{K_\varepsilon(\omega)-1}w_k$. Then it follows that for every $t\in\mathbb N_0$,
    \begin{equation}\label{eq:M-sandwich}
    W\left(\frac{It}{1+\varepsilon}\right)-C_\varepsilon(\omega)\le M_t(\omega) \le W\left(\frac{It}{1-\varepsilon}\right)+C_\varepsilon(\omega).    
    \end{equation}
\end{enumerate}

We will first prove \textcolor{red}{statement (1)}. Consider some $\varepsilon\in(0,1)$. Thanks to \eqref{eq:AO}, the random variables $\tau_k/b_k$ converge $Q$-almost surely to $1/I$. Therefore the event
\[
\Omega_\varepsilon:=\left\{\omega : \exists K_\varepsilon(\omega)\in\NN \text{ such that } \left| \frac{\tau_k(\omega)}{b_k} - \frac{1}{I} \right| \le \frac{\varepsilon}{I} \text{ for all } k\ge K_\varepsilon(\omega) \right\}
\]
has $Q$-probability one. If we now rearrange the inequality $\left|\frac{\tau_k(\omega)}{b_k}-\frac{1}{I}\right|\le \frac{\varepsilon}{I}$, we have \eqref{eq:sandwich-tests}, proving the claim.

We will now prove \textcolor{red}{statement (2)}. Take some $\varepsilon\in(0,1)$ and $\omega\in\Omega_{\varepsilon}$. With minor abuse of notation, let us write $K:=K_{\varepsilon}(\omega)$ and $C:=C_{\varepsilon}(\omega)=\sum_{k=1}^{K-1}w_k$. We will prove the lower and upper bounds in \eqref{eq:M-sandwich} separately. We will start with the \underline{lower bound}. Consider any $k\ge K$ such that $b_k\le \frac{It}{1+\varepsilon}$. Thanks to \eqref{eq:sandwich-tests}, it follows that
\[
\tau_k(\omega)\le \frac{1+\varepsilon}{I}b_k\le \frac{1+\varepsilon}{I}\cdot\frac{It}{1+\varepsilon} = t.
\]
Thus, it follows that $\ind{\tau_k(\omega)\le t}=1$. If we then sum $w_k$ over all such $k$ we have that
\[
M_t(\omega) \ge \sum_{\substack{k\ge K\\ b_k \le It/(1+\varepsilon)}} w_k.
\]
We will now split the entire profile $W(It/(1+\varepsilon))$ into indices below $K$ and indices at least $K$. That is,
\[
W\left(\frac{It}{1+\varepsilon}\right) = \sum_{\substack{k< K\\ b_k \le It/(1+\varepsilon)}} w_k + \sum_{\substack{k\ge K\\ b_k \le It/(1+\varepsilon)}} w_k.
\]
Therefore,
\[
\sum_{\substack{k\ge K\\ b_k \le It/(1+\varepsilon)}} w_k = W\left(\frac{It}{1+\varepsilon}\right)- \sum_{\substack{k< K\\ b_k \le It/(1+\varepsilon)}} w_k \ge W\left(\frac{It}{1+\varepsilon}\right) - C.
\]
This holds because the subtracted sum will be at most $\sum_{k<K}w_k=C$. Therefore, $M_t(\omega)\ge W\left(\frac{It}{1+\varepsilon}\right)-C$. We will now prove the \underline{upper bound}. To this end, suppose that $k\ge K$ and $\tau_k(\omega)\le t$. By the lower half of \eqref{eq:sandwich-tests}, we have that
\[
\frac{1-\varepsilon}{I}b_k\le \tau_k(\omega)\le t.
\]
If we multiply by $I/(1-\varepsilon)$, we have that $b_k\le \frac{It}{1-\varepsilon}$. Therefore, each index $k\ge K$ that contributes to $M_t(\omega)$ must satisfy $b_k\le It/(1-\varepsilon)$. Therefore, we have
\[
M_t(\omega) \le \sum_{k< K} w_k + \sum_{\substack{k\ge K\\ b_k \le It/(1-\varepsilon)}} w_k \le C + W\left(\frac{It}{1-\varepsilon}\right).
\]
This proves \eqref{eq:M-sandwich}. We will now prove the \textcolor{red}{main statement} of the theorem. Assume to this end that \eqref{eq:R} holds and that $W(x)\to\infty$. Consider some $\varepsilon\in(0,1)$ and let us work on the same full probability event $\Omega_{\varepsilon}$ from the proof of \textcolor{red}{(1)}. Once again, we will write $C=C_{\varepsilon}(\omega)$. Since $W(x)\to\infty$, there must exist some $t_0(\omega)$ such that for every $t\ge t_0(\omega)$, both $W\left(\frac{It}{1+\varepsilon}\right)>2C$ and $W\left(\frac{It}{1-\varepsilon}\right)>C$. Now, for such a $t$, thanks to the lower bound in \eqref{eq:M-sandwich}, we have
\[
M_t(\omega)\ge W\left(\frac{It}{1+\varepsilon}\right)-C\ge \frac{1}{2}W\left(\frac{It}{1+\varepsilon}\right).
\]
On a similar vein, the upper bound implies that
\[
M_t(\omega)\le W\left(\frac{It}{1-\varepsilon}\right)+C\le 2W\left(\frac{It}{1-\varepsilon}\right).
\]
Let us now take the logarithms and divide by $t$. We then have for all $t$ sufficiently large enough that
\[
\frac{1}{t}\log W\left(\frac{It}{1+\varepsilon}\right) - \frac{\log 2}{t} \le \frac{1}{t}\log M_t(\omega) \le \frac{1}{t}\log W\left(\frac{It}{1-\varepsilon}\right) + \frac{\log 2}{t}.
\]
Let us use \eqref{eq:R}. For the lower bound, we have
\[
\frac{1}{t}\log W\left(\frac{It}{1+\varepsilon}\right) = \frac{I}{1+\varepsilon} \cdot \frac{1}{\,It/(1+\varepsilon)\,} \log W\left(\frac{It}{1+\varepsilon}\right) \longrightarrow \frac{\rho I}{1+\varepsilon}.
\]
Similarly, 
\[
\frac{1}{t}\log W\left(\frac{It}{1-\varepsilon}\right)\longrightarrow \frac{\rho I}{1-\varepsilon}.
\]
Hence it follows that $\liminf_{t\to\infty} \frac{1}{t}\log M_t(\omega) \ge \frac{\rho I}{1+\varepsilon}$ and $\limsup_{t\to\infty} \frac{1}{t}\log M_t(\omega) \le \frac{\rho I}{1-\varepsilon}$. This will hold for all $\varepsilon\in(0,1)$ on the corresponding almost sure event $\Omega_{\varepsilon}$. We can now intersect over the countable set $\varepsilon\in \mathbb  Q\cap(0,1)$ to get a new event of $Q$-probability one on which both inequalities hold simultaneously for each rational $\varepsilon\in(0,1)$. We can now let $\varepsilon\downarrow 0$ through the rationals to give us
\[
\liminf_{t\to\infty}\frac{1}{t}\log M_t(\omega)\ge \rho I,\quad \limsup_{t\to\infty}\frac{1}{t}\log M_t(\omega)\le \rho I.
\]
The limit therefore exists and equals $\rho I$. This proves \eqref{eq:G}, completing the proof.
\end{proof}

\begin{proof}[Proof of Theorem~\ref{thm:main-prob}]
Take some $\eta>0$ and $\delta\in(0,1)$. We will choose $\varepsilon\in(0,1)$ small enough so that $\left|\frac{\rho I}{1+\varepsilon}-\rho I\right|<\frac{\eta}{3}$ and $\left|\frac{\rho I}{1-\varepsilon}-\rho I\right|<\frac{\eta}{3}$. Thanks to our tail uniform optimality assumption, it follows that there exist $K=K(\varepsilon,\delta)$ and an event $A_{\varepsilon,\delta}$ where $Q(A_{\varepsilon,\delta})\ge 1-\delta$ such that on $A_{\varepsilon, \delta}$ for all $k\ge K$,
\[
\frac{1-\varepsilon}{I}b_k\le \tau_k \le \frac{1+\varepsilon}{I}b_k.
\]
All this is just by definition of tail-uniform optimality. Let us now set $C:=\sum_{k=1}^{K-1} w_k<\infty$. As in our proof of the second part of Theorem~\ref{thm:main}, on the event $A_{\varepsilon,\delta}$ we have that for every $t\in\mathbb N_0$,
\[
W\left(\frac{It}{1+\varepsilon}\right)-C\le M_t\le W\left(\frac{It}{1-\varepsilon}\right)+C.
\]
Now, we know that $W(x)\to\infty$. It follows that there must exist some $t_1$ such that for every $t\ge t_1$, $W\left(\frac{It}{1+\varepsilon}\right)>2C$ and $W\left(\frac{It}{1-\varepsilon}\right)>C$. Therefore, it follows that on $A_{\varepsilon,\delta}$ for all $t\ge t_1$, $\frac12W\left(\frac{It}{1+\varepsilon}\right)\le M_t\le2W\left(\frac{It}{1-\varepsilon}\right)$. Now, using \eqref{eq:R}, let us choose $t_2\ge t_1$ large enough so that for all $t\ge t_2$, both $\left| \frac{1}{t}\log W\left(\frac{It}{1+\varepsilon}\right) - \frac{\rho I}{1+\varepsilon} \right| < \frac{\eta}{3}$ and $\left| \frac{1}{t}\log W\left(\frac{It}{1-\varepsilon}\right) - \frac{\rho I}{1-\varepsilon} \right| < \frac{\eta}{3}$, and $\frac{\log 2}{t}<\frac{\eta}{3}$. Thus, on $A_{\varepsilon,\delta}$, we have for all $t\ge t_2$ that
\[
\frac{1}{t}\log M_t \ge \frac{1}{t}\log W\left(\frac{It}{1+\varepsilon}\right)-\frac{\log 2}{t} > \frac{\rho I}{1+\varepsilon}-\frac{2\eta}{3} > \rho I-\eta.
\]
Similarly,
\[
\frac{1}{t}\log M_t \le \frac{1}{t}\log W\left(\frac{It}{1-\varepsilon}\right)+\frac{\log 2}{t} < \frac{\rho I}{1-\varepsilon}+\frac{2\eta}{3} < \rho I+\eta.
\]
Therefore, it must hold that for every $t\ge t_2$,
\[
Q\left( \left| \frac{1}{t}\log M_t-\rho I \right|>\eta \right) \le Q(A_{\varepsilon,\delta}^c) \le \delta.
\]
Recall that $\delta\in(0,1)$ was chosen arbitrarily. This proves convergence in $Q$-probability, completing the proof.
\end{proof}

\section{Omitted Proofs for the Profile Asymptotics}

\begin{proof}[Proof of Corollary~\ref{cor:unweighted}]
This is the same as Theorem~\ref{thm:main} applied with $w_k\equiv 1$. Because then $W(x)=N(x)$.    
\end{proof}

\begin{proof}[Proof of Lemma~\ref{lem:inversion}]
We will first prove \textcolor{red}{(1)}. Consider some $\delta\in(0,c)$. Because $b_k/\log k\to c$, it follows that there exists some $k_{\delta}$ such that for all $k\ge k_{\delta}$,
\begin{equation}\label{eq:I1}
(c-\delta)\log k\le b_k\le (c+\delta)\log(k).  
\end{equation}
We will first derive a lower bound on $N(x)$. Clearly, if $k\ge k_{\delta}$ and $(c+\delta)\log k\le x$, then thanks to \eqref{eq:I1}, we have that $b_k\le x$, therefore meaning $k\le N(x)$. Equivalently, we can recognize that every integer $k\le \exp\left(\frac{x}{c+\delta}\right)$ with $k\ge k_{\delta}$ will contribute to $N(x)$. Therefore, for all $x$ sufficiently large, we have that 
\[
N(x)\ge \exp\left(\frac{x}{c+\delta}\right)-k_{\delta}.
\]
If we now take logarithms and divide by $x$, we have
\[
\liminf_{x\to\infty}\frac{\log N(x)}{x}\ge\frac{1}{c+\delta}.
\]
We will now derive an upper bound. If $b_k\le x$ and $k\ge k_{\delta}$, then thanks to \eqref{eq:I1}, we have that $(c-\delta)\log k\le b_k\le x$. Therefore, $k\le \exp\left(\frac{x}{c-\delta}\right)$. Therefore, all the contributing indices will satisfy this bound save for possibly finitely many indices $k<k_{\delta}$. Thus for all $x$ large enough, we have
\[
N(x)\le k_{\delta} + \exp\left(\frac{x}{c-\delta}\right).
\]
Taking logarithms and dividing by $x$ gives us
\[
\limsup_{x\to\infty}\frac{\log N(x)}{x}\le \frac{1}{c-\delta}.
\]
Now, $\delta\in(0,c)$ was arbitrary, hence letting $\delta\downarrow 0$, we may conclude that $\log N(x)/x \to 1/c$.

We will now prove \textcolor{red}{(2)}. Consider any $A>0$. Because $b_k/\log k\to \infty$, it follows that there exists some $k_A$ such that $b_k\ge A\log k$ for all $k\ge k_A$. Now, if $b_k\le x$ and $k\ge K_A$, then it must hold that $A\log k\le x$. In other words, $k\le e^{x/A}$. Therefore, for all $x$ large enough, we have that $N(x)\le k_A + e^{x/A}$. Therefore,
\[
\limsup_{x\to\infty}\frac{\log N(x)}{x}\le \frac{1}{A}.
\]
Now, recognizing that $A>0$ was arbitrary and letting $A\to\infty$, we have
\[
\limsup_{x\to\infty}\frac{\log N(x)}{x}\le 0.
\]
Since $N(x)\ge 1$ for all $x$ large enough, the left hand side above is nonnegative. Thus, the limit exists and equals $0$. This completes the proof.
\end{proof}

\begin{proof}[Proof of Corollary~\ref{cor:classification}]
The first statement is immediate from the first statement of Lemma~\ref{lem:inversion} and Corollary~\ref{cor:unweighted}. The second statement here follows from the second statement of Lemma~\ref{lem:inversion} and Corollary~\ref{cor:unweighted}. Finally, the third statement is simply a special case (the case where $c=1$) of the first statement.   
\end{proof}

\begin{proof}[Proof of Proposition~\ref{prop:general-h}]
We must first show that the level schedule is decreasing. By assumption, $h$ is non-decreasing. Thus for $k\ge k_0$ we have that
\[
\frac{\alpha_{k+1}}{\alpha_k}=\frac{k}{k+1}\exp\bigl(-(h(k+1)-h(k))\bigr)\le \frac{k}{k+1}<1.
\]
It follows therefore that $(\alpha_k)_{k\ge k_0}$ is strictly decreasing. Thus, $b_k=\log(1/\alpha_k)$ is increasing. We now show $e$-process validity. By definition of $c_h$, we have that
\[
\sum_{k=k_0}^\infty \alpha_k = c_h \sum_{k=k_0}^\infty \frac{e^{-h(k)}}{k} = 1.
\]
Now let $\sigma$ be any finite stopping rule and consider some $P\in\PP$. Thanks to Tonelli's theorem and \eqref{eq:V}, 
\[
\EE_P[B_\sigma^{(h)}] = \sum_{k=k_0}^\infty P(\tau_{\alpha_k}\le \sigma)\le\sum_{k=k_0}^\infty P(\tau_{\alpha_k}<\infty)\le\sum_{k=k_0}^\infty \alpha_k = 1.
\]
It follows that $B^{(h)}$ is a $\PP$-$e$-process. Now, $b_k=\log(1/\alpha_k)=\log k + h(k)-\log c_h$. Dividing by $\log k$ and sending $k\to\infty$ thus gives us
\[
\frac{b_k}{\log k}=1+\frac{h(k)}{\log k}-\frac{\log c_h}{\log k} \longrightarrow 1.
\]
Because, $h(k)=o(\log k)$ and $(\log c_h)/\log k\to 0$. Now, recall that we assumed asymptotic optimality along all these levels. We may also freely reindex $\{k_0, k_0+1,\dots\}$ to $\mathbb N$. All these allow us to conclude thanks to Corollary~\ref{cor:classification}'s first statement with $c=1$, that $\log (B_t^{(h)})/t\to I$ almost surely under $Q$. This completes the proof.
\end{proof}

\begin{proposition}\label{prop:dyadic-count}
Suppose that $\alpha_k=2^{-k}$ and that $E_t:=\sum_{k=1}^\infty \ind{\tau_{2^{-k}}\le t}$. Assume that wp 1 under $Q$ that,
\[
\frac{\tau_{2^{-k}}}{k\log 2}\longrightarrow \frac{1}{I}.
\]
Then it follows that $E$ is a $\mathcal P$-$e$-process. And wp 1 under $Q$,
\[
\frac{1}{t}\log E_t\longrightarrow 0.
\]
\end{proposition}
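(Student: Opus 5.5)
The plan is to read Proposition~\ref{prop:dyadic-count} as the unweighted case of the general theory, with $w_k\equiv 1$ and $\alpha_k=2^{-k}$, and to check the hypotheses of Proposition~\ref{prop:eprocess-validity} and Corollary~\ref{cor:classification} (or Corollary~\ref{cor:unweighted}).

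\textbf{Validity.} Since $\sum_{k\ge1}2^{-k}=1$, condition \eqref{eq:W} holds with unit weights. Condition \eqref{eq:V} holds by the standing assumption that each $\tau_{2^{-k}}$ is a level-$2^{-k}$ test. Proposition~\ref{prop:eprocess-validity} then shows that $E$ is a nondecreasing $\mathcal P$-$e$-process.

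\textbf{Growth.} Here $b_k=k\log 2$, so $b_k/\log k=k\log 2/\log k\to\infty$. The hypothesis on the tests is exactly \eqref{eq:AO}, because $\tau_k/b_k=\tau_{2^{-k}}/(k\log 2)$. The second statement of Corollary~\ref{cor:classification} therefore gives $\frac1t\log E_t\to 0$ almost surely under $Q$.

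As a sanity check, the profile can also be computed directly: $N(x)=\lfloor x/\log 2\rfloor$, so $\log N(x)/x\to 0$ and $N(x)\to\infty$. Corollary~\ref{cor:unweighted} with $\rho=0$ then yields the same conclusion.

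The only point needing care is the event where $E_t=0$ for small $t$, on which $\log E_t=-\infty$. The limit statement concerns only large $t$, and $E_t\to\infty$ almost surely. This follows from Proposition~\ref{prop:consistency-transfer}, since \eqref{eq:AO} forces every $\tau_k$ to be finite eventually in $k$ almost surely. So $\log E_t$ is finite for all large $t$. This is already implicit in the proof of Theorem~\ref{thm:main}, so I expect no real obstacle.
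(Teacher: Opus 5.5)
Your proposal is correct and matches the paper's proof: validity follows from Proposition~\ref{prop:eprocess-validity} because $\sum_k 2^{-k}=1$, and the zero rate follows from the second statement of Corollary~\ref{cor:classification} because $k\log 2/\log k\to\infty$. One small point in your side remark: Proposition~\ref{prop:consistency-transfer} as stated needs a deterministic $k_0$, whereas \eqref{eq:AO} only gives a random $K(\omega)$; the remark is not needed anyway, since the lower sandwich bound in the proof of Theorem~\ref{thm:main} already gives $E_t\to\infty$ pathwise.
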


\begin{proof}
Thanks to Proposition~\ref{prop:eprocess-validity}, we have validity since with $w_k\equiv 1$,
\[
\sum_{k=1}^{\infty}\alpha_k=\sum_{k=1}^{\infty}2^{-k}=1.
\]
Now, for the growth rate, note that $b_k=\log(1/\alpha_k)=k\log 2$. Thus,
\[
\frac{b_k}{\log k}=\frac{k\log 2}{\log k}\to\infty.
\]
Thus, thanks to the second statement of Corollary~\ref{cor:classification} we can conclude that $\frac{\log E_t}{t}\to 0$ with probability one under $Q$. This completes the proof.
\end{proof}

Notice how in Proposition~\ref{prop:dyadic-count} we have the issue where at the time scale $t\asymp k/I$ on which this level $2^{-k}$ rejects, the counting process will only have size about $k$, and thus $\log k = o(k)$. Now, the next idea here is that power law schedules give rate $I/(1+\varepsilon)$.

\begin{proposition}\label{prop:power-law}
Consider some $\varepsilon>0$. And let
\[
c_\varepsilon:=\frac{1}{\sum_{k=1}^\infty k^{-(1+\varepsilon)}}=\frac{1}{\zeta(1+\varepsilon)}.
\]
Now let us define $\alpha_k:=c_\varepsilon k^{-(1+\varepsilon)}$. And, $B_t^{(\varepsilon)}:=\sum_{k=1}^\infty\ind{\tau_{\alpha_k}\le t}$. Assume that wp 1 under $Q$,
\[
\frac{\tau_{\alpha_k}}{\log(1/\alpha_k)} \longrightarrow \frac{1}{I}.
\]
Then it follows that $B^{(\varepsilon)}$ is a $\mathcal P$-$e$-process. And wp 1 under $Q$,
\[
\frac{1}{t}\log B_t^{(\varepsilon)}\longrightarrow \frac{I}{1+\varepsilon}.
\]
\end{proposition}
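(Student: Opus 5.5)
The plan mirrors the proof of Proposition~\ref{prop:dyadic-count}: establish validity through Proposition~\ref{prop:eprocess-validity}, then obtain the growth rate from the first statement of Corollary~\ref{cor:classification} with $c=1+\varepsilon$.

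\textbf{Validity.} We take unit weights $w_k\equiv 1$. The schedule satisfies
\[
\sum_{k\ge1}\alpha_k=c_\varepsilon\,\zeta(1+\varepsilon)=1,
\]
so \eqref{eq:W} holds. The assumed size bound \eqref{eq:V} at each level $\alpha_k$ then lets Proposition~\ref{prop:eprocess-validity} give that $B^{(\varepsilon)}$ is a nondecreasing $\mathcal P$-$e$-process. We also note that $(\alpha_k)$ is strictly decreasing in $(0,1)$ with $\alpha_k\downarrow 0$, so it is an admissible level schedule. Here $\alpha_1=c_\varepsilon<1$ because $\zeta(1+\varepsilon)>1$.

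\textbf{Level scale.} Next we compute
\[
b_k=\log(1/\alpha_k)=(1+\varepsilon)\log k-\log c_\varepsilon .
\]
Since $\log c_\varepsilon$ is a constant, $b_k=(1+\varepsilon)\log k+o(\log k)$. Equivalently, $b_k/\log k\to 1+\varepsilon\in(0,\infty)$.

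\textbf{Growth rate.} The assumed almost-sure rate $\tau_{\alpha_k}/\log(1/\alpha_k)\to 1/I$ is exactly \eqref{eq:AO}. The first statement of Corollary~\ref{cor:classification} with $c=1+\varepsilon$ then yields $\frac1t\log B^{(\varepsilon)}_t\to I/(1+\varepsilon)$ almost surely under $Q$.

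Behind this, Lemma~\ref{lem:inversion} gives $\log N(x)/x\to 1/(1+\varepsilon)$, and $N(x)\to\infty$, which is what Corollary~\ref{cor:unweighted} requires. There is no real obstacle in this argument; the only point needing care is checking that the additive constant $-\log c_\varepsilon$ is absorbed into the $o(\log k)$ term.
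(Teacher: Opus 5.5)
Your proposal is correct and follows the paper's proof essentially step for step: validity comes from $\sum_k \alpha_k = c_\varepsilon\,\zeta(1+\varepsilon)=1$ via Proposition~\ref{prop:eprocess-validity}, and the rate comes from $b_k/\log k\to 1+\varepsilon$ together with the first statement of Corollary~\ref{cor:classification} at $c=1+\varepsilon$. Your added check that $\alpha_1=c_\varepsilon<1$, so the schedule is admissible, is a small detail the paper leaves implicit.
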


\begin{proof}
By construction, we have
\[
\sum_{k=1}^{\infty}\alpha_k = c_{\varepsilon}\sum_{k=1}^{\infty}k^{-(1+\varepsilon)}=1,
\]
proving that $B^{(\varepsilon)}$ is a $\PP$-$e$-process thanks to Proposition~\ref{prop:eprocess-validity}. Also, $b_k=\log(1/\alpha_k)=(1+\varepsilon)\log k-\log c_{\varepsilon}$. Therefore,
\[
\frac{b_k}{\log k}=1+\varepsilon-\frac{\log c_{\varepsilon}}{\log k}\longrightarrow 1+\varepsilon.
\]
As such, thanks to the first statement of Corollary~\ref{cor:classification} with $c=1+\varepsilon$, we have almost surely under $Q$ that
\[
\frac{1}{t}\log B_t^{\varepsilon}\longrightarrow\frac{I}{1+\varepsilon}.
\]
\end{proof}

Another beautiful idea is if we consider a full-rate unweighted schedule. Meaning, schedules with $1/[k(\log k)^p]$. We will formalize these $\log$-$p$ schedules in the following corollary.

\begin{corollary}\label{cor:logp}
Take some $p>1$ and some particular index $k_0\ge 3$. Where $k\ge k_0$ let us define
\[
S_p:=\sum_{k=k_0}^\infty \frac{1}{k(\log k)^p},
\]
And, in addition, let us define $c_p:=1/S_p$, and $\alpha_k:=\frac{c_p}{k(\log k)^p}$. Now, let $B_t^{(p)}:=\sum_{k=k_0}^\infty \ind{\tau_{\alpha_k}\le t}$. Assume that wp 1 under $Q$,
\[
\frac{\tau_{\alpha_k}}{\log(1/\alpha_k)} \longrightarrow \frac{1}{I}.
\]
Then $S_p<\infty$ and $B^{(p)}$ is a $\mathcal P$-$e$-process. And wp 1 under $Q$,
\[
\frac{1}{t}\log B_t^{(p)}\longrightarrow I.
\]
\end{corollary}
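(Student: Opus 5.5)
The plan is to obtain Corollary~\ref{cor:logp} as a direct instance of Proposition~\ref{prop:general-h} with the choice $h(k):=p\log\log k$ on $\{k_0,k_0+1,\dots\}$. With this choice $e^{-h(k)}/k=1/[k(\log k)^p]$, so the normalizing constant $c_h$ of Proposition~\ref{prop:general-h} equals $c_p=1/S_p$. The levels $\alpha_k$ and the process $B^{(h)}$ then coincide exactly with the $\alpha_k$ and $B^{(p)}$ of the corollary. Once the hypotheses on $h$ are checked, validity and the almost-sure rate $I$ follow verbatim from that proposition.

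The hypotheses to check are the following.
\begin{enumerate}
\item \emph{Well-definedness and monotonicity.} Since $k_0\ge 3>e$, we have $\log k>1$ and $\log\log k>0$ for $k\ge k_0$. Hence $h$ takes values in $(0,\infty)$, and $h$ is nondecreasing because both $\log$ and $\log\log$ are increasing.
\item \emph{Sublogarithmic growth.} We have $h(k)/\log k=p\log\log k/\log k\to 0$, so $h(k)=o(\log k)$.
\item \emph{Summability.} We need $S_p<\infty$, which is exactly $\sum_{k\ge k_0}e^{-h(k)}/k<\infty$. I would use the integral test. The map $x\mapsto 1/[x(\log x)^p]$ is positive and decreasing on $[k_0-1,\infty)\subset[2,\infty)$. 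The substitution $u=\log x$ gives $\int_{k_0-1}^\infty \frac{dx}{x(\log x)^p}=\int_{\log(k_0-1)}^\infty u^{-p}\,du=\frac{(\log(k_0-1))^{1-p}}{p-1}<\infty$, using $p>1$ and $\log(k_0-1)\ge\log 2>0$. This also confirms $c_p\in(0,\infty)$.
\end{enumerate}

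With these checks, Proposition~\ref{prop:general-h} applies. As in its proof, $\sum_{k\ge k_0}\alpha_k=1$, so Proposition~\ref{prop:eprocess-validity} (unit weights, indices reindexed from $k_0$) gives that $B^{(p)}$ is a $\PP$-$e$-process. The assumed pathwise rate $\tau_{\alpha_k}/\log(1/\alpha_k)\to 1/I$ is exactly the rate hypothesis of that proposition, and it yields $\frac1t\log B^{(p)}_t\to I$ $Q$-almost surely.

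As a sanity check, one can see directly that $b_k=\log k+p\log\log k-\log c_p$, so $b_k/\log k\to1$; Corollary~\ref{cor:classification}(3) would then also give the result. There is no real obstacle here. The only point requiring any care is the summability step, namely making sure the integral-test comparison is set up on a range where $\log x>0$, which is what the condition $k_0\ge 3$ guarantees.
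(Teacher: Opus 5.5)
Your proof is correct and is essentially the paper's argument: the paper also checks $S_p<\infty$ by the integral test with $u=\log x$, gets validity from $\sum_{k\ge k_0}\alpha_k=c_pS_p=1$ via Proposition~\ref{prop:eprocess-validity}, and then computes $b_k=\log k+p\log\log k-\log c_p$ and applies Corollary~\ref{cor:classification} with $c=1$ directly. The only difference is that you route through Proposition~\ref{prop:general-h} with $h(k)=p\log\log k$, whose proof is that same computation; your ``sanity check'' is exactly the paper's route, and your integral comparison starting at $k_0-1$ is slightly more careful than the paper's.
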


\begin{proof}
The first thing we need to show is that $S_p<\infty$. Observe that the function $f_p(x):=\frac{1}{x(\log x)^p}$ for $x\ge 3$ is positive and decreasing for large $x$, meaning we may apply the integral test. If we substitute $u=\log x$, it is clear that $\int \frac{dx}{x(\log x)^p}=\int u^{-p}du$. Therefore,
\[
\int_{k_0}^{\infty} \frac{dx}{x(\log x)^p} = \int_{\log k_0}^{\infty} u^{-p}du = \frac{(\log k_0)^{1-p}}{p-1}<\infty.
\]
It follows that $S_p<\infty$. Now, observe that $\sum_{k=k_0}^{\infty}\alpha_k=c_p S_p=1$. Thus, $B^{(p)}$ must be a $\PP$-$e$-process thanks to Proposition~\ref{prop:eprocess-validity}. Now, let us compute $b_k$. That is, $b_k=\log(1/\alpha_k)=\log k + p\log\log k - \log c_p$. Dividing by $\log k$, we have
\[
\frac{b_k}{\log k} = 1 + p\frac{\log \log k}{\log k}-\frac{\log c_p}{\log k}.
\]
Both $\log\log k/\log k\to 0$ and $(\log c_p)/\log k\to 0$. So, $b_k/\log k\to 1$. Through the first statement of Corollary~\ref{cor:classification} with $c=1$ we thus get with probability 1 under $Q$,
\[
\frac{1}{t}\log B_t^{(p)}\longrightarrow I.
\]
This completes the proof.
\end{proof}

Note that really we can take $p=2$, i.e.\ it is the simplest concrete choice. That is, take $\alpha_k\propto \frac{1}{k(\log k)^2}$. Now, the last schedule we will present is one where we have $1/[k\log k(\log\log k)^2]$.

\begin{corollary}\label{cor:gamma}
Take some index $k_0\ge 16$. For $k\ge k_0$ let us define $S_\star:=\sum_{k=k_0}^\infty\frac{1}{k\log k(\log\log k)^2}$, $c^\star:=1/S_\star$. And,
\[
\gamma_k:=\frac{c_\star}{k\log k(\log\log k)^2}.
\]
Now let $G_t:=\sum_{k=k_0}^\infty \ind{\tau_{\gamma_k}\le t}$. And assume that wp 1 under $Q$,
\[
\frac{\tau_{\gamma_k}}{\log(1/\gamma_k)}\longrightarrow \frac{1}{I}.
\]
Then it follows that $S_\star<\infty$ and $G$ is a $\mathcal P$-$e$-process. And wp 1 under $Q$,
\[
\frac{1}{t}\log G_t\longrightarrow I.
\]
\end{corollary}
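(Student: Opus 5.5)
The plan follows the proof of Corollary~\ref{cor:logp} almost verbatim, with one more layer of iterated logarithm. There are three steps: prove $S_\star<\infty$ by the integral test, get validity from Proposition~\ref{prop:eprocess-validity} after normalizing, and compute $b_k=\log(1/\gamma_k)$ so that the first statement of Corollary~\ref{cor:classification} applies with $c=1$.

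\textbf{Summability.} The condition $k_0\ge 16$ ensures $\log\log k\ge \log\log 16>0$, so the summand is positive and well defined. Consider $f(x):=1/(x\log x(\log\log x)^2)$ on $[k_0,\infty)$. It is positive and decreasing, since each factor in the denominator is positive and increasing there. Substituting $u=\log\log x$ gives $du=dx/(x\log x)$, so
\[
\int_{k_0}^\infty f(x)\,dx=\int_{\log\log k_0}^\infty u^{-2}\,du=\frac{1}{\log\log k_0}<\infty .
\]
The integral test then gives $S_\star<\infty$. Hence $c_\star=1/S_\star\in(0,\infty)$ and $\sum_{k\ge k_0}\gamma_k=1$.

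\textbf{Validity.} The sequence $\gamma_k$ is strictly decreasing, because its denominator is strictly increasing, and it tends to $0$. After reindexing $\{k_0,k_0+1,\dots\}$ to $\mathbb N$, the schedule satisfies \eqref{eq:V} by hypothesis on the tests and \eqref{eq:W} with $w_k\equiv1$. Proposition~\ref{prop:eprocess-validity} then shows that $G$ is a $\mathcal P$-$e$-process. One caveat: $\gamma_k\in(0,1)$ holds automatically for $k>k_0$, since $\gamma_{k_0}+\gamma_{k_0+1}+\dots=1$. Only the first term could fail to be strictly below $1$, and this is harmless, or one simply notes that every tail term is strictly less than $1$.

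\textbf{Growth rate.} We have
\[
b_k=\log(1/\gamma_k)=\log k+\log\log k+2\log\log\log k-\log c_\star .
\]
Dividing by $\log k$ gives $b_k/\log k\to1$, because $\log\log k/\log k\to0$, $\log\log\log k/\log k\to0$, and $\log c_\star/\log k\to0$. The pathwise hypothesis is exactly \eqref{eq:AO} along these levels. The first statement of Corollary~\ref{cor:classification} with $c=1$ (equivalently Lemma~\ref{lem:inversion} combined with Corollary~\ref{cor:unweighted}) then yields $\frac1t\log G_t\to I$ $Q$-almost surely.

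The only mildly delicate point is the substitution in the integral test and checking that $f$ is decreasing from $k_0$ onward. Everything else is routine.
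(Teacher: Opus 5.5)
Your proposal is correct and matches the paper's proof step for step. It uses the integral test (your single substitution $u=\log\log x$ is the paper's two substitutions $u=\log x$, $v=\log u$ combined), then normalization plus Proposition~\ref{prop:eprocess-validity} for validity, then $b_k/\log k\to 1$ fed into the first statement of Corollary~\ref{cor:classification} with $c=1$. Your caveat about the first term is unnecessary, since $\gamma_{k_0}=1-\sum_{k>k_0}\gamma_k<1$ as well.
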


\begin{proof}
The first thing we will show is that $S_{\star}<\infty$. For $x\ge 16$, the function $f_{\star}(x):=\frac{1}{x\log x(\log\log x)^2}$ is positive and for large $x$ it is decreasing. Let us now make the substitutions of $u=\log x$ and $v=\log u=\log\log x$. Therefore,
\[
\int \frac{dx}{x\log x(\log\log x)^2}= \int \frac{du}{u(\log u)^2} = \int v^{-2}dv =-\frac{1}{v}=-\frac{1}{\log\log x}.
\]
And thus,
\[
\int_{k_0}^{\infty} \frac{dx}{x\log x(\log \log x)^2}=\frac{1}{\log \log k_0}<\infty.
\]
By the integral test, $S_{\star}<\infty$. And so $\sum_{k=k_0}^{\infty} \gamma_k=c_{\star}S_{\star}=1$. It follows that $G$ is a $\PP$-$e$-process thanks to Proposition~\ref{prop:eprocess-validity}. Let us now compute the $\log$-level. In particular, $\log(1/\gamma_k)=\log k + \log\log k + 2\log\log\log k - \log c_{\star}$. Dividing by $\log k$, we have
\[
\frac{\log(1/\gamma_k)}{\log k}=1+\frac{\log\log k}{\log k} + 2\frac{\log\log\log k}{\log k}-\frac{\log c_{\star}}{\log k}.
\]
As $k\to\infty$, each of these latter three terms goes to $0$. So, $\log(1/\gamma_k)/\log k \to 1$. Once again, thanks to the first statement of Corollary~\ref{cor:classification} with $c=1$, we have that $\log(G_t)/ t \to I$ almost surely under $Q$. This completes the proof.
\end{proof}

\begin{proof}[Proof of Proposition~\ref{prop:weighted-dyadic}]
Let us first show validity. To this end, set $w_k:=\frac{6}{\pi^2}\frac{2^k}{k^2}$ and $\alpha_k:=2^{-k}$. Then it follows that
\[
\sum_{k=1}^{\infty} w_k\alpha_k =\frac{6}{\pi^2}\sum_{k=1}^{\infty}\frac{1}{k^2} = \frac{6}{\pi^2}\cdot\frac{\pi^2}{6}=1.
\]
Thanks to Proposition~\ref{prop:eprocess-validity}, it follows that $M^{\mathrm{dy}}$ is a $\PP$-$e$-process. Now, here we have that $b_k=\log(1/\alpha_k)=k\log 2$. Therefore, for $x\ge0$ we have
\[
W_{\mathrm{dy}}(x) = \frac{6}{\pi^2} \sum_{k:\, k\log 2 \le x} \frac{2^k}{k^2} = \frac{6}{\pi^2} \sum_{k=1}^{\lfloor x/\log 2\rfloor} \frac{2^k}{k^2}.
\]
Therefore, everything here will reduce to determining the asymptotics of the partial sums $S_n:=\sum_{k=1}^n \frac{2^k}{k^2}$. We will show that this $S_n$ is comparable to its last term. To this end, let $a_k:=\frac{2^k}{k^2}$. For $k\ge 5$, we have
\[
\frac{a_{k-1}}{a_k} = \frac{2^{k-1}/(k-1)^2}{2^k/k^2} = \frac12 \left(\frac{k}{k-1}\right)^2 \le \frac12 \left(\frac54\right)^2 = \frac{25}{32} < \frac45.
\]
Therefore, it follows that for every $n\ge 5$ and every $j\in\{0,1,\dots,n-5\}$, we have that $a_{n-j}\le \left(\frac45\right)^{j}a_n$. If we sum over all $j$, we have
\[
\sum_{k=5}^n a_k=\sum_{j=0}^{n-5} a_{n-j}\le a_n \sum_{j=0}^{\infty}\left(\frac45\right)^j = 5a_n.
\]
Therefore, 
\[
S_n = \sum_{k=1}^4 a_k + \sum_{k=5}^n a_k \le \left(\sum_{k=1}^4 a_k\right)+5a_n.
\]
Now, $a_n=2^{n}/n^{2}\to\infty$. Therefore, there exists some $n_0$ such that for all $n\ge n_0$, $\sum_{k=1}^4 a_k\le a_n$. Thus, for all $n\ge n_0$, we have $a_n\le S_n\le 6a_n$. Equivalently we can state that for all $n\ge n_0$,
\begin{equation}\label{eq:Dy}
\frac{2^n}{n^2}\le S_n\le6\frac{2^n}{n^2}.
\end{equation}
We will now extract the exponential rate of $W_{\mathrm{dy}}$. Let $n(x):=\lfloor x/\log 2\rfloor$. Then, $n(x)\to\infty$ and $W_{\mathrm{dy}}(x)=\frac{6}{\pi^2}S_{n(x)}$. Thanks to \eqref{eq:Dy}, we have
\[
\log W_{\mathrm{dy}}(x) = \log\left(\frac{6}{\pi^2}\right) + \log S_{n(x)} = n(x)\log 2 - 2\log n(x) + O(1).
\]
Now, $n(x)\log 2=x+O(1)$. And, $\log n(x)=o(x)$. We may therefore conclude that $\log W_{\mathrm{dy}}(x)/x \to 1$. Now, recall we assumed that $\tau_{2^{-k}}/b_k\to 1/I$ almost surely under $Q$. Further, we proved that the profile satisfies $\log W_{\mathrm{dy}}(x)/x \to 1$. Therefore, thanks to Theorem~\ref{thm:main}, we may conclude that almost surely under $Q$,
\[
\frac1t\log M_t^{\mathrm{dy}}\longrightarrow I.
\]
This completes the proof.
\end{proof}

\section{Omitted Proofs for Optimality Implications}

\begin{proof}[Proof of Proposition~\ref{prop:log-growth-to-threshold}]
Let us write $b=\log(1/\alpha)$. For this, we will show the almost sure case. Note that the in probability case is identical upon replacing the eventual inequalities with probability bounds (and using monotonicity). Consider some $\eta\in(0,1)$. Consider any path where $\log(E_t)/t\to L$. On such a path, for all sufficiently large $t$, we have $(1-\eta)Lt\le \log E_t\le (1+\eta)Lt$. Now, for $t_{+}(b)=\lceil \frac{b}{(1-\eta)L}\rceil$, our lower bound tells us that $\log E_{t_{+}(b)}\ge b$ for all large enough $b$. It follows that $T_{\alpha}(E)\le t_{+}(b)$. Now, for $t_{-}(b):=\lfloor\frac{b}{(1+\eta)L}\rfloor-1$, thanks to both monotonicity and the upper bound we have for all large enough $b$ that
\[
\sup_{t\le t_{-}(b)}\log E_t=\log E_{t_{-}(b)}\le (1+\eta)Lt_{-}(b)<b.
\]
Therefore, $T_{\alpha}(E)>t_{-}(b)$ holds. As a result,
\[
\frac{1}{(1+\eta)L}\le\liminf_{\alpha\downarrow0}\frac{T_\alpha(E)}{\log(1/\alpha)}\le\limsup_{\alpha\downarrow0}\frac{T_\alpha(E)}{\log(1/\alpha)}\le\frac{1}{(1-\eta)L}.
\]
Letting $\eta\downarrow 0$ completes the proof.
\end{proof}

\begin{proof}[Proof of Theorem~\ref{thm:nested-inprob}]
Consider some $\varepsilon\in(0,1)$. For $x\ge b_1$, let us define $k_{-}(x):=\max\{k\in\mathbb N: b_k\le x\}$ and $k_+(x):=\min\{k\in\NN : b_k>x\}=k_-(x)+1$. Since $b_k\uparrow\infty$, it follows that both indices will be well defined for all sufficiently large $x$, and $k_{\pm}(x)\to\infty$ as $x\to\infty$. Now let us set $x_t^{-}:=\frac{It}{1+\varepsilon}$ and $x_t^{+}:=\frac{It}{1-\varepsilon}$. We will first derive a lower bound that holds with high $Q$-probability. If $\tau_{k_{-}(x_t^{-})}\le t$, then thanks to nestedness we have that for every $j\le k_{-}(x_t^{-})$, $\tau_j\le t$. Therefore, it follows that
\[
M_t\ge \sum_{j=1}^{k_-(x_t^-)} w_j=W(x_t^-).
\]
Therefore, it follows that we must have $Q(M_t\ge W(x_t^{-}))\ge Q(\tau_{k_{-}(x_t^{-})}\le t)$. Also, $b_{k_{-}(x_t^{-})}\le x_t^{-}=It/(1+\varepsilon)$. So, $t\ge \frac{1+\varepsilon}{I}b_{k_{-}(x_t^{-})}$. Therefore it must hold that
\[
Q\bigl(\tau_{k_-(x_t^-)} \le t\bigr) \ge Q\left( \frac{\tau_{k_-(x_t^-)}}{b_{k_-(x_t^-)}} \le \frac{1+\varepsilon}{I} \right) \longrightarrow 1.
\]
This follows from the fact that $k_{-}(x_t^{-})\to\infty$ and that coordinate-wise convergence in $Q$-probability will be preserved along these deterministic subsequences. 

We now move on to the upper bound. If $\tau_{k_{+}(x_t^{+})}>t$, then thanks to nestedness, for every $j\ge k_{+}(x_t^{+})$, $\tau_j>t$. This means that only the indices $j<k_{+}(x_t^{+})$ can contribute to $M_t$. Therefore,
\[
M_t\le \sum_{j<k_{+}(x_t^{+})} w_j = W(x_t^{+}).
\]
It follows that $Q(M_t\le W(x_t^{+}))\ge Q(\tau_{k_{+}(x_t^{+})}>t)$. Now, observe that $b_{k_{+}(x_t^{+})}>x_t^{+}=It/(1-\varepsilon)$. Thus, we have $t<\frac{1-\varepsilon}{I}b_{k_{+}(x_t^{+})}$. Hence,
\[
Q\bigl(\tau_{k_+(x_t^+)} > t\bigr) \ge Q\left( \frac{\tau_{k_+(x_t^+)}}{b_{k_+(x_t^+)}} > \frac{1-\varepsilon}{I} \right) \longrightarrow 1.
\]
Together, both estimates give us
\[
Q\left( W\left(\frac{It}{1+\varepsilon}\right) \le M_t \le W\left(\frac{It}{1-\varepsilon}\right) \right) \longrightarrow 1.
\]
We may now conclude that $\log(M_t)/t \to \rho I$ in $Q$-probability by our squeezing argument from Theorem~\ref{thm:main-prob}. This completes the proof.
\end{proof}

\begin{proof}[Proof of Corollary~\ref{cor:expectation-plus-concentration}]
We will first prove \textcolor{red}{(1)}. Thanks to Markov's inequality for every $\varepsilon>0$ we have
\[
Q\left(\left|\frac{\tau_k}{b_k}-\frac{1}{I}\right|>\varepsilon\right)\le \frac{1}{\varepsilon b_k}\EE_Q\left[\left|\tau_k - \frac{b_k}{I}\right|\right]\longrightarrow 0.
\]
Therefore, $\tau_k/b_k\to 1/I$ in $Q$-probability, so we may apply Theorem~\ref{thm:nested-inprob} to complete the proof for this case. Now, we prove case \textcolor{red}{(2)}. Because $\EE_Q[\tau_k]/b_k\to 1/I$, there exists $K_{\varepsilon}$ such that for every $k\ge K_{\varepsilon}$,
\[
\left|\frac{\EE_Q[\tau_k]}{b_k}-\frac{1}{I}\right|\le \frac{\varepsilon}{2}.
\]
For such $k$, thanks to Chebyshev's inequality we have
\[
Q\left(\left|\frac{\tau_k}{b_k}-\frac{1}{I}\right|>\varepsilon\right)\le Q\left(|\tau_k-\EE_Q[\tau_k]|>\frac{\varepsilon b_k}{2}\right)\le \frac{4\operatorname{Var}_Q(\tau_k)}{\varepsilon^2 b_k^2}.
\]
The right hand side here tends to $0$ by assumption. Therefore, $\tau_k/b_k\to 1/I$ in $Q$-probability, meaning Theorem~\ref{thm:nested-inprob} once again applies. This completes the proof.
\end{proof}

\begin{proof}[Proof of Corollary~\ref{cor:summable-deviations}]
Consider some rational $\varepsilon\in(0,1)$. Thanks to the Borel-Cantelli Lemma, it holds that
\[
Q\left(\left|\frac{\tau_k}{b_k}-\frac{1}{I}\right|>\frac{\varepsilon}{I}\text{ i.o.}\right)=0.
\]
Let us now intersect over the countable set of rational $\varepsilon\in(0,1)$. Then, for $Q$-almost every $\omega$, for every rational $\varepsilon$, there exists some $K_{\varepsilon}(\omega)$ such that for all $k\ge K_{\varepsilon}(\omega)$,
\[
\left|\frac{\tau_k(\omega)}{b_k}-\frac{1}{I}\right|\le\frac{\varepsilon}{I}.
\]
This is \eqref{eq:AO}, completing the proof.
\end{proof}

\section{How to Build First-Passage Tests from Score Processes}\label{sec:extremelycute}
We will show how, concretely, one can construct such almost sure optimal sequential tests. One example of this is as follows. Suppose we have a score process that grows linearly. Then first-passage tests from this will be pathwise optimal.

\begin{proposition}\label{prop:score-to-as}
Suppose that $S=(S_t)_{t\in\NN_0}$ is some adapted real-valued process. Assume that for every $x\ge 0$,
\[
\sup_{P\in\mathcal P}P\left(\sup_{t\in\NN_0}S_t\ge x\right)\le e^{-x}.
\]
Now, for $\alpha\in(0,1)$, let us define the first passage time $\tau_\alpha:=\inf\{t\in\NN_0: S_t\ge \log(1/\alpha)\}$. Then it follows that for every $\alpha\in(0,1)$, $\sup_{P\in\mathcal P}P(\tau_\alpha<\infty)\le \alpha$. Now suppose in addition we have that under $Q$ almost surely as $t\to\infty$ for some $I\in(0,\infty)$,
\[
\frac{S_t}{t}\longrightarrow I.
\]
Then it follows that for every sequence $\alpha_k\downarrow 0$ with $b_k=\log(1/\alpha_k)$ we have that almost surely under $Q$,
\[
\frac{\tau_{\alpha_k}}{b_k}\longrightarrow\frac{1}{I}.
\]
\end{proposition}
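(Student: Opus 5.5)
The proof has two independent parts: a null size bound, and a pathwise rate under $Q$. For the size bound, I will fix $\alpha\in(0,1)$ and set $b=\log(1/\alpha)$. By definition of $\tau_\alpha$ as a first passage time, $\{\tau_\alpha<\infty\}=\{\exists t: S_t\ge b\}=\{\sup_t S_t\ge b\}$; the inclusion $\supseteq$ holds because a supremum over $\NN_0$ that is $\ge b$ need not be attained. So I will instead use the inclusion $\{\tau_\alpha<\infty\}\subseteq\{\sup_t S_t\ge b\}$, which is all that is needed. The hypothesis then gives $P(\tau_\alpha<\infty)\le e^{-b}=\alpha$ for every $P\in\PP$. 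Measurability of $\tau_\alpha$ as a stopping time is immediate because $S$ is adapted: $\{\tau_\alpha\le t\}=\bigcup_{s\le t}\{S_s\ge b\}\in\mathcal F_t$.

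For the rate, I will work pathwise on the $Q$-full event where $S_t/t\to I$, fix $\eta\in(0,1)$, and choose $t_0(\omega)$ such that $(1-\eta)It\le S_t\le(1+\eta)It$ for all $t\ge t_0$.

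The upper bound comes from a direct crossing. Set $t_+ := \lceil b_k/((1-\eta)I)\rceil$. For $k$ large enough that $t_+\ge t_0$, we get $S_{t_+}\ge(1-\eta)It_+\ge b_k$, hence $\tau_{\alpha_k}\le t_+$. Dividing by $b_k$ and letting $k\to\infty$ gives $\limsup_k \tau_{\alpha_k}/b_k\le 1/((1-\eta)I)$. This also shows that $\tau_{\alpha_k}<\infty$ eventually.

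The lower bound is the main obstacle. Unlike in Proposition~\ref{prop:log-growth-to-threshold}, $S$ need not be monotone, so the argument must rule out an early crossing at small $t$. I will split $\{0,\dots,t_-\}$, where $t_-:=\lfloor b_k/((1+\eta)I)\rfloor$, into two ranges. On the range $t<t_0$, the finitely many values satisfy $\max_{t<t_0}S_t(\omega)=:C(\omega)<\infty$. Since $b_k\to\infty$, we have $b_k>C(\omega)$ for large $k$, so no crossing occurs there. On the range $t_0\le t\le t_-$, we have $S_t\le(1+\eta)It\le(1+\eta)It_-\le b_k$. This allows equality, so to get a strict inequality I will use $t_-:=\lceil b_k/((1+\eta)I)\rceil-1$, which gives $(1+\eta)It_-<b_k$. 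Together the two ranges give $\tau_{\alpha_k}>t_-$ for large $k$, so $\liminf_k\tau_{\alpha_k}/b_k\ge 1/((1+\eta)I)$.

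Finally, I will let $\eta\downarrow0$ along rationals, intersecting countably many full-probability events, to conclude that $\tau_{\alpha_k}/b_k\to1/I$ $Q$-almost surely. The argument uses only $b_k\to\infty$, so it holds for every sequence $\alpha_k\downarrow0$.
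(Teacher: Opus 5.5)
Your proposal is correct and follows essentially the same route as the paper: the size bound via $\{\tau_\alpha<\infty\}\subseteq\{\sup_t S_t\ge b\}$, the upper bound via the forced crossing at $\lceil b_k/((1-\eta)I)\rceil$, and the lower bound obtained by taking $b_k$ above the finite maximum of $S$ over the window before $t_0$. The only differences are cosmetic. You rule out crossings on all of $\{0,\dots,t_-\}$, whereas the paper evaluates at the crossing time itself ($b_k\le S_{\tau_{\alpha_k}}\le(I+\varepsilon)\tau_{\alpha_k}$). Also, your opening set equality should simply be dropped, since, as you note yourself, only $\subseteq$ holds and only $\subseteq$ is needed.
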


\begin{proof}
Notice how we have that $\{\tau_{\alpha}<\infty\}\subseteq\{\sup_{t\in\mathbb N_0}S_t\ge \log(1/\alpha)\}$. Therefore,
\[
\sup_{P\in\PP}P(\tau_\alpha<\infty)\le \exp(-\log(1/\alpha))=\alpha.
\]
Consider an $\omega$ such that $S_t(\omega)/t \to I$. Now let $\varepsilon\in(0,I)$. Then there must exist some $T_{\varepsilon}(\omega)$ such that for all $t\ge T_{\varepsilon}(\omega)$, $(I-\varepsilon)t\le S_t(\omega)\le(I+\varepsilon)t$. Since $b_k\to\infty$, there must exist some $K_{\varepsilon}(\omega)$ such that $b_k> \max_{0\le t < T_{\varepsilon}(\omega)} S_t(\omega)$ for all $k\ge K_{\varepsilon}(\omega)$. Now, for such a $k$, the crossing level $b_k$ cannot be reached before time $T_{\varepsilon}(\omega)$. Therefore, $\tau_{\alpha_k}(\omega)\ge T_{\varepsilon}(\omega)$. 

Now, for the upper bound, let us define $u_k:=\lceil \frac{b_k}{I-\varepsilon}\rceil$. We then have that $u_k\ge T_{\varepsilon}(\omega)$ for all large $k$. Therefore, $S_{u_k}(\omega)\ge (I-\varepsilon)u_k\ge b_k$. Thus, it follows that $\tau_{\alpha_k}(\omega)\le u_k$. Thus,
\[
\frac{\tau_{\alpha_k}(\omega)}{b_k}\le \frac{1}{I-\varepsilon}+\frac{1}{b_k}.
\]
Now, for the lower bound, we have already shown for all large $k$, $\tau_{\alpha_k}(\omega)\ge T_{\varepsilon}(\omega)$. Therefore, $b_k\le S_{\tau_{\alpha_k}(\omega)}(\omega)\le(I+\varepsilon)\tau_{\alpha_k}(\omega)$. And from this, we can see that
\[
\frac{\tau_{\alpha_k}(\omega)}{b_k}\ge \frac{1}{I+\varepsilon}.
\]
Let us now take both estimates together. For all large $k$, we therefore have
\[
\frac{1}{I+\varepsilon}\le \frac{\tau_{\alpha_k}(\omega)}{b_k}\le \frac{1}{I-\varepsilon}+\frac{1}{b_k}.
\]
If we send $k\to\infty$ and then $\varepsilon\downarrow 0$ through the rationals we get that $\tau_{\alpha_k}(\omega)/b_k \to 1/I$. This holds on the almost sure event $\{S_t/t\to I\}$, proving the claim.
\end{proof}

Proposition~\ref{prop:score-to-as} applies to the following constructions. First, suppose that $E=(E_t)$ is a strictly positive $\mathcal P$-$e$-process. And suppose that one sets $S_t:=\log E_t$. Then by composite Ville's inequality for e-processes it follows that
\[
\sup_{P\in\PP}P\left(\sup_{t\in\Nzero}S_t\ge x\right)=\sup_{P\in\PP} P\left(\sup_{t\in\Nzero}E_t\ge e^x\right)\le e^{-x}.
\]
Then, it follows that the threshold test $\tau_\alpha:=\inf\{t\in\NN_0: E_t\ge 1/\alpha\}$ is valid. And in addition suppose almost surely under $Q$ we have that
\[
\frac{1}{t}\log E_t\longrightarrow I.
\]
Then it follows that almost surely under $Q$,
\[
\frac{\tau_{\alpha_k}}{\log(1/\alpha_k)}\longrightarrow \frac{1}{I}.
\]

In the setting where we have a simple null versus simple alternative, we can take $S_t:=\sum_{i=1}^t \log\frac{dQ}{dP}(X_i)$. Then by the SLLN it follows that $S_t/t\to \KL(Q\|P)$ almost surely under $Q$, and so the likelihood-ratio threshold tests will be pathwise optimal.

\section{Experimental Setup}\label{sec:exp1}
We validate our findings with a number of experiments. Our profile plots prove that $\log W(x)/x$ determines the asymptotic $e$-power, as opposed to the number of tests alone, for example. In our SPRT results, we confirm that the two items we need are size control under the null and first-passage scaling under the alternative (in our Gaussian setting). Our Monte Carlo $e$-power curves validate the resulting schemes of $2^{-k}$ counting (which is subexponential), power counting (which approaches $I/(1+\varepsilon)$), log-corrected and iterated-log unweighted schedules (which achieve the full rate $I$), and the weighted geometrically-spaced counting (which also recovers the full rate $I$). Further, we run null-side experiments to, in the finite sample case, verify $e$-process validity at deterministic times and grid-defined stopping times. Lastly, our delayed-test and weak-optimality experiments illustrate our theorem's weakness. For one, aggregation will transfer the speed of whatever based tests are used. Second, expectation optimality leads to random limiting growth rates instead of a deterministic $\rho I$. We summarize all the schemes tested in Table~\ref{tab:schemes}.

\begin{table}[h]
\centering
\caption{Primary schemes tested. The target column uses $I=1/2$.}
\label{tab:schemes}
\begin{tabularx}{\textwidth}{@{}lXcc@{}}
\toprule
Scheme & Level and weight definition & \(\rho\) & Target \(\rho I\) \\
\midrule
Dyadic count & \(\alpha_k=2^{-k}\), \(w_k=1\) & 0 & 0 \\
Power count, \(\varepsilon=0.5\) & \(\alpha_k=c_{\varepsilon}k^{-(1+\varepsilon)}\), \(w_k=1\) & 2/3 & 1/3 \\
Power count, \(\varepsilon=0.1\) & \(\alpha_k=c_{\varepsilon}k^{-(1+\varepsilon)}\), \(w_k=1\) & 10/11 & 0.4545 \\
Log-corrected count & \(\alpha_k=c/[k(\log k)^2]\), \(k\ge 10\), \(w_k=1\) & 1 & 0.5 \\
Iterated-log count & \(\alpha_k=c/[k\log k(\log\log k)^2]\), \(k\ge 16\), \(w_k=1\) & 1 & 0.5 \\
Weighted dyadic & \(\alpha_k=2^{-k}\), \(w_k=(6/\pi^2)2^k/k^2\) & 1 & 0.5 \\
Fractional weighted dyadic & \(\alpha_k=2^{-k}\), \(w_k\propto 2^{\rho k}/k^2\), \(\rho=0.5\) & 0.5 & 0.25 \\
\bottomrule
\end{tabularx}
\end{table}

We work in the simplest setting under which the base-level tests are pathwise optimal. In all the experiments, we use independent observations from either $P:X_i\sim N(0,1)$ or $Q:X_i\sim N(\mu, 1)$. In all our experimental results, we use $\mu=1$. Therefore, $I=\KL(Q\|P)=\mu^2/2=0.5$. The log-likelihood ratio process here is
\[
S_t=\sum_{i=1}^t \left(\mu X_i - \frac{\mu^2}{2}\right).
\]
This level $\alpha$ one sided SPRT will reject when
\[
\tau_{\alpha}=\inf\{t\ge 0: S_t> \log(1/\alpha)\}.
\]
Since Gaussian increments are continuous, in our simulations, the difference between equality and strict inequality is trivial. We evaluate the running maximum $H_t=\max_{0\le s\le t} S_s$ so that $\tau_{\alpha_k}\le t$ is equivalent to $H_t\ge b_k$. As such, every aggregate can be viewed as $M_t=W(H_t)$, allowing us to avoid simulating each of the stopping times separately.

In all log-power plots we use the strict positivity correction of
\[
\log M_t^{(\eta)}=\log\{\eta+(1-\eta)M_t\}.
\]
Here, $\eta=10^{-12}$. This will prevent $\log 0$ at early terms and is asymptotically negligible as $M_t\to\infty$. Now, for a particular Monte Carlo mean curve we compute the pointwise standard errors as
\[
\operatorname{SE}\{\widehat m(t)\}=\frac{s_t}{\sqrt{n}}.
\]
Here, $s_t$ is the sample standard deviation across independent Monte Carlo paths at time $t$. In addition, the $e$-power mean at time $t$ is defined as
\[
\epower(t)=\frac{1}{n}\sum_{i=1}^n \frac{\log\{\eta+(1-\eta)M_t^{(i)}\}}{t}.
\]
The $e$-power figures plot the means without shaded pointwise standard-error bars for brevity and to ensure that the curves remain legible. However, all standard errors are computed numerically and may be be printed from the saved arrays when our code is run. In the SPRT size experiment, all the plotted vertical bars are normal-approximation $95\%$ Monte Carlo intervals $\widehat p\pm1.96\sqrt{\widehat p(1-\widehat p)/n}$.

\section{Experimental Results}\label{sec:exp2}
\subsection*{Experiment 1: Weight Profiles}
In this experiment, we evaluate the deterministic profile $W(x)$ for each schedule. Our theorem~\ref{thm:main} predicts that $\log W(x)/x\to \rho$. The validity constraint entails that the envelope $\log W(x)-x\le 0$. In particular, this profile grid contains $700$ points over $[2,1650]$. We do not use any random simulation here. 

Our results in Figure~\ref{fig:combined-profiles} match what we would expect from the theory. The $2^{-k}$ counting has exponent tending to zero. The power-schedules approach $1/(1+\varepsilon)$. The log-corrected, iterated-log, and full weighted schedules approach an exponent of one. The envelope plot also confirms that each profile remains below the $e^x$ upper bound. This is important because under the alternative such experiments can only be interpreted correctly if the schedules behave as expected with regard to the predicted profiles.

\begin{figure}[h]
    \centering
    \begin{subfigure}[b]{0.48\textwidth}
        \centering
        \includegraphics[width=\textwidth]{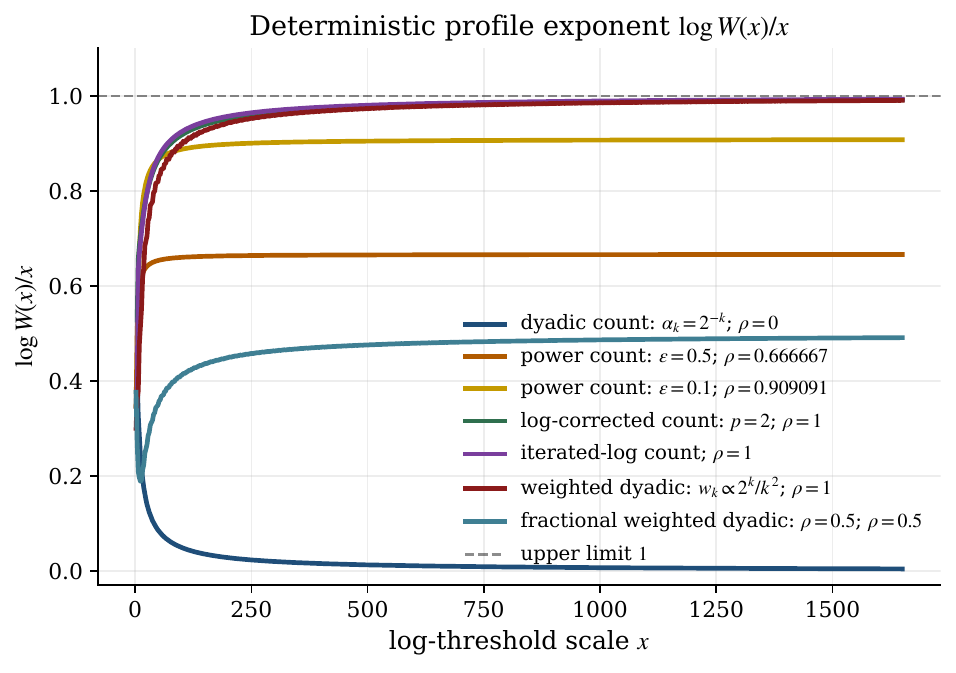}
        \caption{Deterministic profile exponents}
        \label{fig:profile-exponent}
    \end{subfigure}
    \hfill
    \begin{subfigure}[b]{0.48\textwidth}
        \centering
        \includegraphics[width=\textwidth]{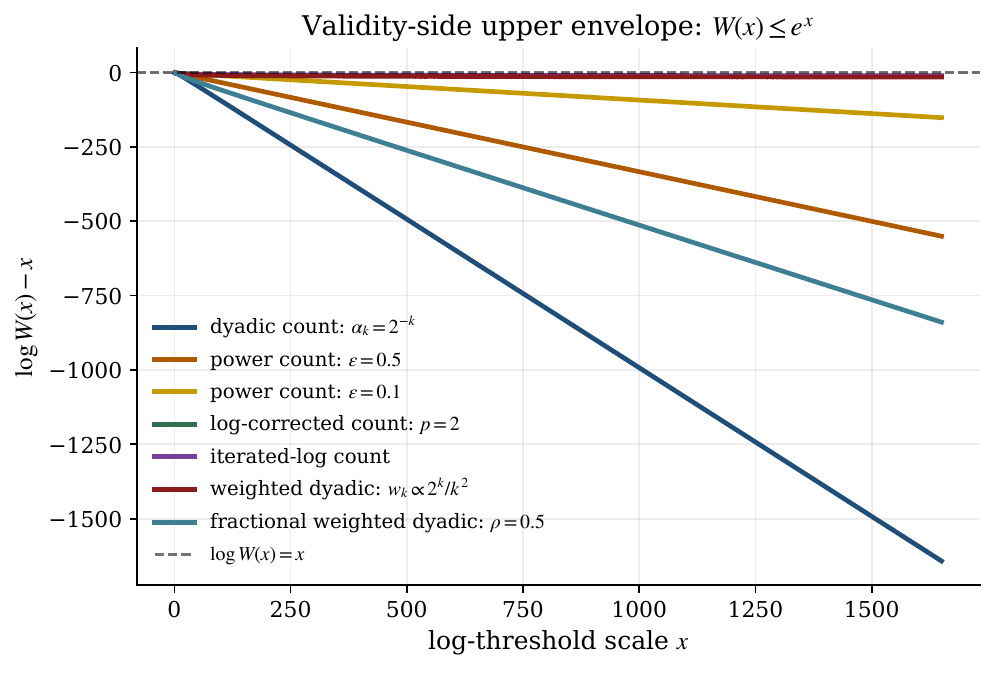}
        \caption{Validity-side envelope}
        \label{fig:profile-envelope}
    \end{subfigure}   
    \caption{Comparison of profile exponents (a) and validity-side envelopes (b). In (a), the deterministic profile exponents are $\log W(x)/x$, and the curves approach the predicted $\rho$ values that are listed in Table~\ref{tab:schemes}. In (b), this is the validity-side envelope $\log W(x)-x$. Staying below zero is the numerical analogue of $W(x)\le e^x$.}
    \label{fig:combined-profiles}
\end{figure}

\FloatBarrier

\subsection*{Experiment 2: Finite horizon SPRT size under the null}
In this experiment, we test that the one-side likelihood ratio SPRT obeys
\[
P\left(\sup_{t\le T} S_t\ge \log(1/\alpha)\right)\le\alpha.
\]
at every finite horizon. This must be true because the likelihood ratio process is a nonnegative martingale under $P$. In our experiment, we estimate this crossing probability over a grid of $\alpha$ values. We run using $80,000$ null paths, a horizon $T=1800$, and $24$ logarithmically spaced levels $\alpha\in[0.002,0.25]$. All error bars are $\widehat p\pm1.96\sqrt{\widehat p(1-\widehat p)/80000}$.

In Figure~\ref{fig:sprt-size}, all the empirical crossing probabilities lie below or at the diagonal up to Monte Carlo uncertainty. This confirms correctness of our size requirement in the aggregation theorem (for the Gaussian test family). Albeit this is a finite-horizon experiment and therefore weaker than the infinite-horizon theorem, it serves as a sanity check for all following experiments. Note that we do not optimize for being tight at the boundary $\alpha$, as it is not necessary for the purposes of this work. However, note that there are works that indeed do this \citep{fischer2026improving}.

\begin{figure}[h!]
\centering
\includegraphics[width=0.7\textwidth]{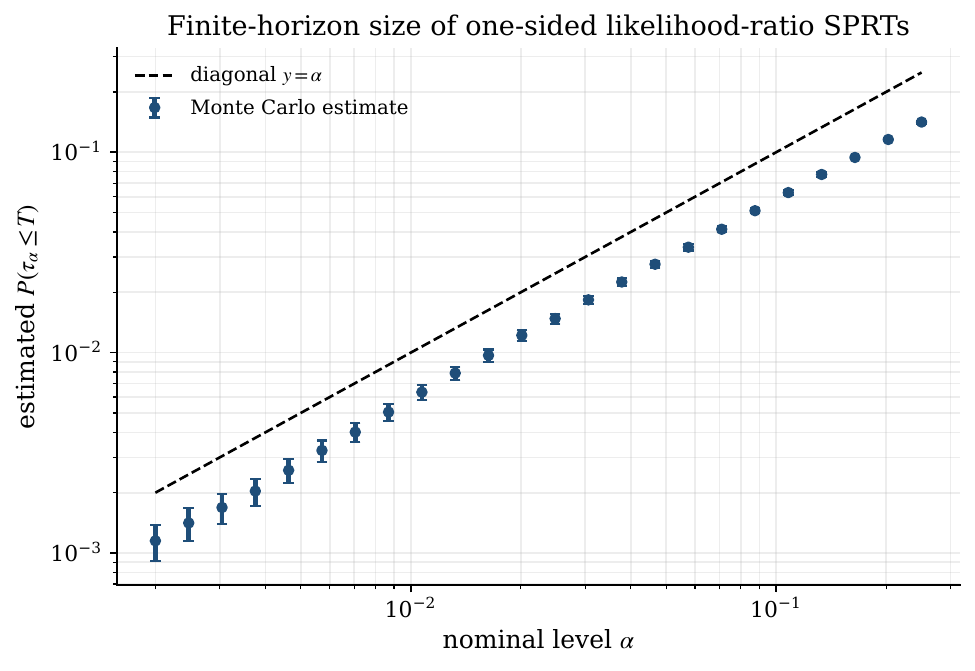}
\caption{Finite-horizon size check for the Gaussian one-sided SPRT. The diagonal is the nominal level $\alpha$.}
\label{fig:sprt-size}
\end{figure}

\FloatBarrier

\subsection*{Experiment 3: First-passage scaling under the alternative}
In this experiment, we verify that the first passage time for threshold $b$ should satisfy $\tau_b/b\to 1/I=2$. This must hold since under $Q$, the likelihood ratio score satisfies $S_t/t\to I$. And this is the base-test optimality condition that our main theorem requires. For the experiment, we run using $5000$ alternative paths, a horizon $T=4000$, and $70$ thresholds $b\in[8,1560]$. The plotted band is the empirical $10\%$-$90\%$ range of $\tau_b/b$. The mean and median are both shown separately.

In Figure~\ref{fig:hitting-ratio}, we see the mean and median crossing-time ratios move toward the target $2$. The quantile band shrinks on the scale as the threshold grows. Therefore, we validate using the Gaussian SPRT as a pathwise asymptotically optimal base family for the aggregation experiments.

\begin{figure}[h]
\centering
\includegraphics[width=0.7\textwidth]{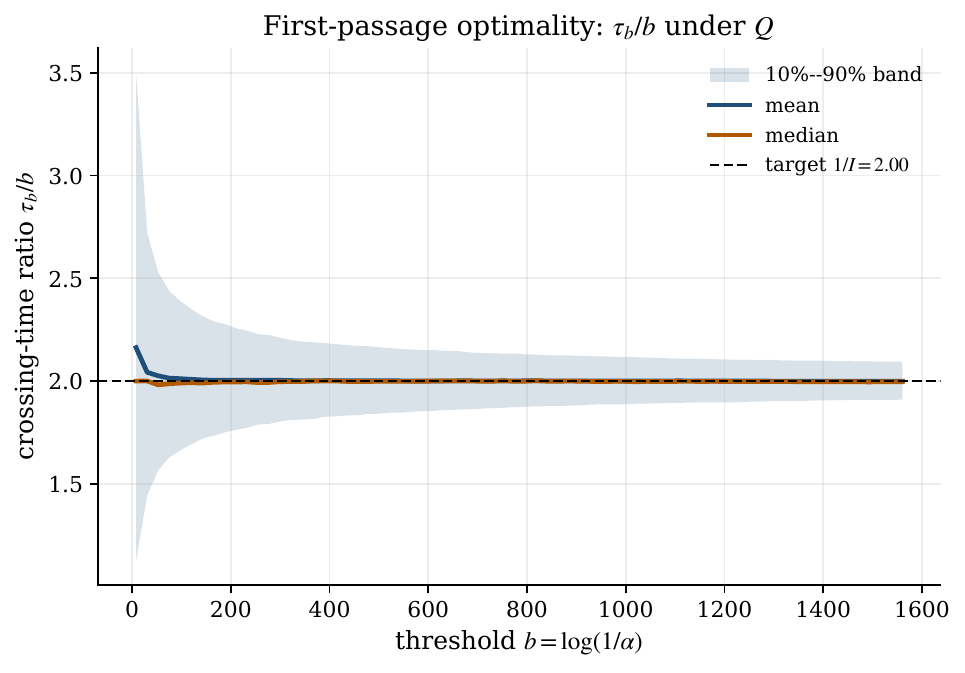}
\caption{First-passage ratio \(\tau_b/b\) under \(Q\). The target is \(1/I=2\).}
\label{fig:hitting-ratio}
\end{figure}

\FloatBarrier

\subsection*{Experiment 4: The $e$-power under the alternative}
The main experiment here estimates the $e$-power as
\[
\frac{1}{t}\EE_Q\left[\log\{\eta+(1-\eta)M_t\}\right]
\]
for each schedule. The expected outcome here is a hierarchy of limiting rates. That is, zero for the $2^{-k}$ counting, $I/(1+\varepsilon)$ for the power counting, $I$ for the full rate profiles, and $\rho I$ for the fractional weighted $2^{-k}$ profiles. We run $20000$ alternative paths up to $T=2500$. The time grid has $291$ unique points, consisting of an early geometric grid, a linear grid of $240$ points along with time zero. To reduce the Monte Carlo noise in between-schedule comparisons, we reuse the same simulated $H_t$ matrix across all schedules.

In Figure~\ref{fig:main-combined}, all the curves indeed follow the partial ordering. In particular, the $2^{-k}$ counting scheme grows much too quickly on the exponential scale. The $\varepsilon=0.5$ power schedule approaches a lower target than the $\varepsilon=0.1$ power schedule. All of the log-corrected, iterated-log, and weighted dyadic approach the full-rate line. And, the fractional weighted dyadic schedule closely follows the target $0.25$. In the log-growth plot, we present the same result but as a slope comparison instead.

\begin{figure}[h]
    \centering
    \begin{subfigure}[b]{0.48\textwidth}
        \centering
        \includegraphics[width=\textwidth]{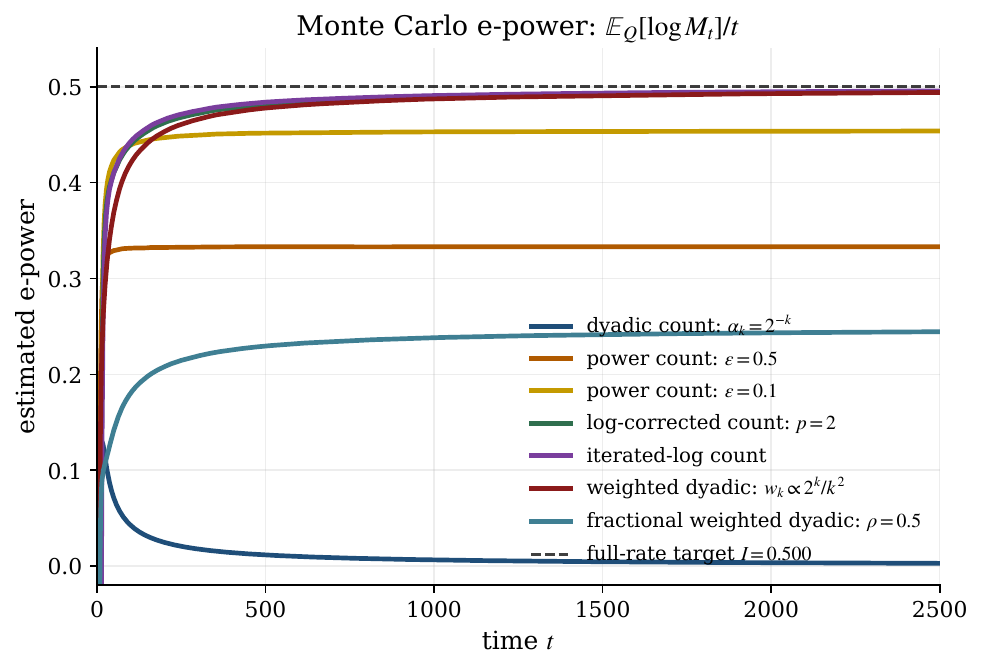}
        \caption{Monte Carlo e-power curves under \(Q\). Horizontal reference: full-rate target \(I=0.5\).}
        \label{fig:main-epower}
    \end{subfigure}
    \hfill
    \begin{subfigure}[b]{0.48\textwidth}
        \centering
        \includegraphics[width=\textwidth]{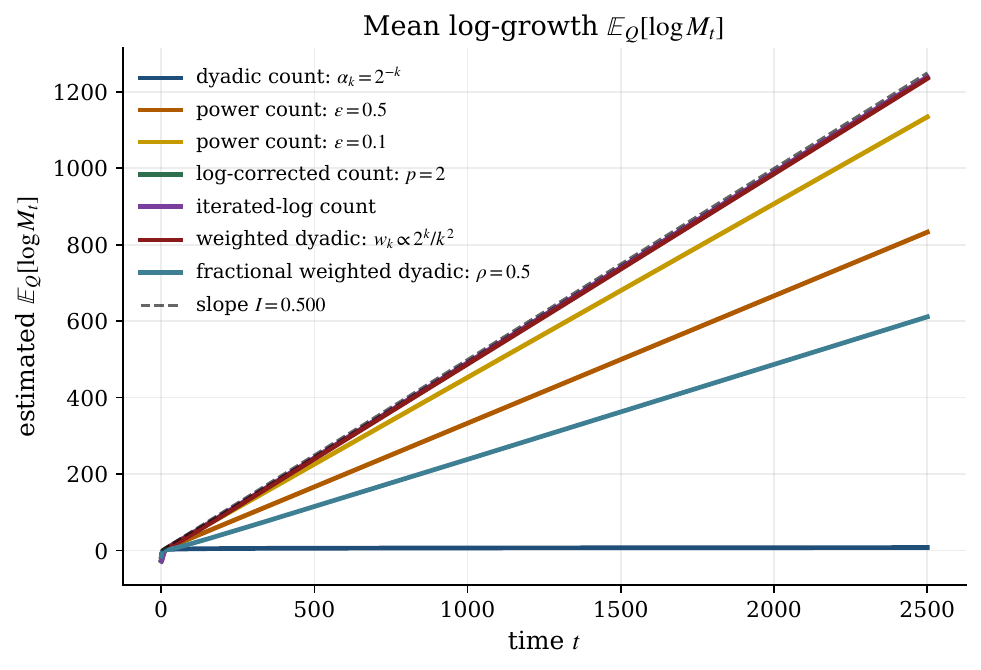}
        \caption{Estimated mean log-growth \(\EE_Q[\log M_t]\). A full-rate process should have asymptotic slope \(I=0.5\).}
        \label{fig:main-loggrowth}
    \end{subfigure}
    \caption{Analysis of growth metrics under \(Q\): e-power curves (a) and mean log-growth (b).}
    \label{fig:main-combined}
\end{figure}

\FloatBarrier

\subsection*{Experiment 5: Finite-time speed among the full-rate schemes}
In this experiment, we go a bit further by noting that the finite-time approach of the schemes to $I$ may vary significantly. This is because the schemes only share the asymptotic rate $I$. We compare the gap $I-\widehat \PP_{M}(t)$, the first grid times at which the full-rate scheme reaches $80\%$, $90\%$, and $95\%$ of $I$. In addition, we compare the final-time distribution of $t^{-1}\log M_t$. All these figures are directly derives from the same $20000$ alternative paths that are used in Experiment 4. We do not perform any further random simulation for this experiment.

Importantly, the results here in Figure~\ref{fig:full-rate-analysis} can be intuited as follows. The full-rate idea we present is asymptotic, but does not guarantee identical finite-time performance. Both the weighted dyadic and log-corrected schedules certainly may differ in their speed to reaching the $\KL$ time. Further, in the final-time boxplots, we show that there is residual finite-time dispersion even if the limiting rate is the same. Thus, here we give finite horizon results.

\begin{figure}[h]
    \centering
    \begin{subfigure}[b]{0.31\textwidth}
        \centering
        \includegraphics[width=\textwidth]{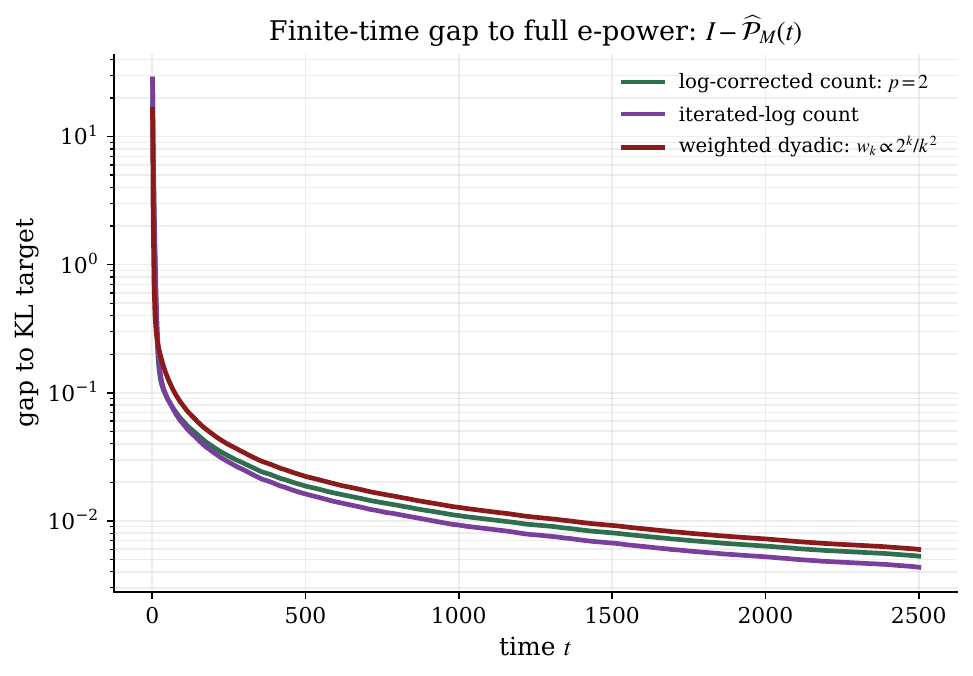}
        \caption{Finite-time gap to full e-power for the full-rate schedules. The vertical scale is logarithmic.}
        \label{fig:full-gap}
    \end{subfigure}
    \hfill
    \begin{subfigure}[b]{0.31\textwidth}
        \centering
        \includegraphics[width=\textwidth]{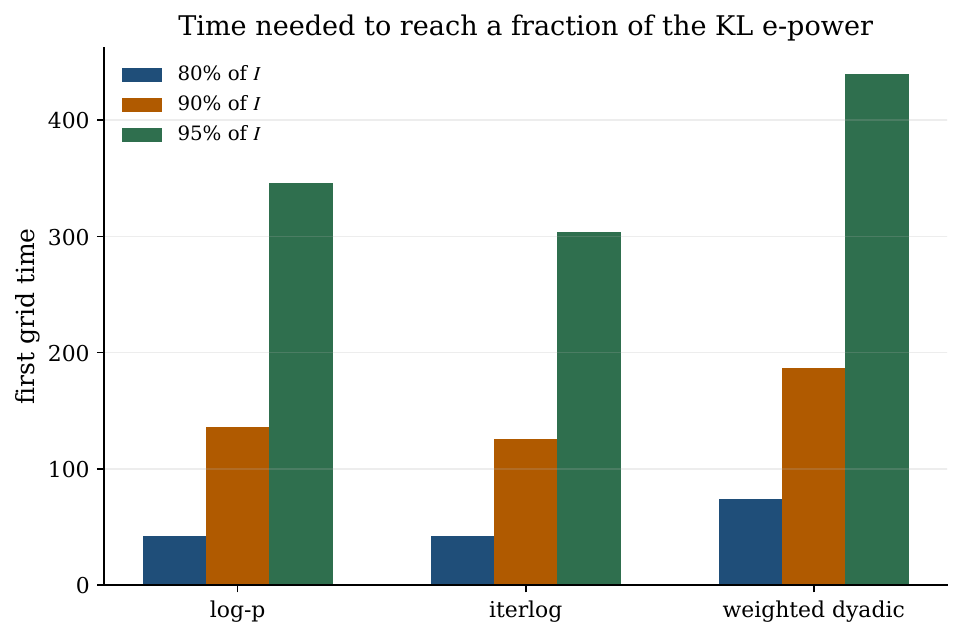}
        \caption{First simulated grid time at which each full-rate schedule reaches 80\%, 90\%, and 95\% of \(I\).}
        \label{fig:reaching-times}
    \end{subfigure}
    \hfill
    \begin{subfigure}[b]{0.31\textwidth}
        \centering
        \includegraphics[width=\textwidth]{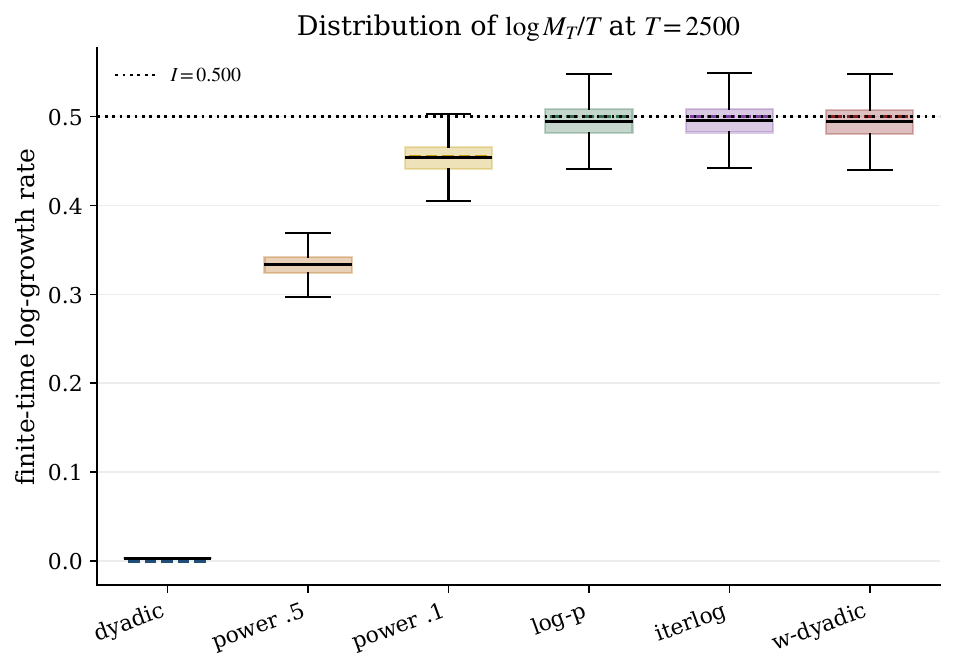}
        \caption{Distribution of final-time log-growth rates at \(T=2500\). Dashed within-box reference marks show each scheme's theoretical target.}
        \label{fig:final-dist}
    \end{subfigure}
    \caption{Performance analysis of full-rate schedules: e-power gaps (a), reaching times (b), and final growth distributions (c).}
    \label{fig:full-rate-analysis}
\end{figure}

\FloatBarrier

\subsection*{Experiment 6: Aggregate validity under the null}
In this experiment, we test that the $e$-process property holds. That is, for every finite stopping time $\sigma$, we must have that $\EE_P[M_{\sigma}]\le 1$. We check deterministic-time expectations and grid-defined stopping rules (e.g.\ stop the first time the grid-observed aggregate exceeds $c$; else stop at the final grid time). In the run, we use $50000$ null paths, a horizon $T=1800$ and a $170$ time point grid. We evaluate the dyadic count, power count with $\varepsilon=0.5$, log-corrected counts with $p=2$, and the full weighted dyadic count. We use $c\in\{1.5, 2, 5, 10\}$ for the stopping thresholds.

All of these expectations in Figure~\ref{fig:null-checks-combined} (i.e.\ stopped ones or deterministic-time ones) are consistent with the $e$-process boundary. Although they do not prove optional stopping, the simulations stress-test the numerical implementation of the null-side behavior, weighted sums, capital normalizations, and so on.

\begin{figure}[h]
    \centering
    \begin{subfigure}[b]{0.48\textwidth}
        \centering
        \includegraphics[width=\textwidth]{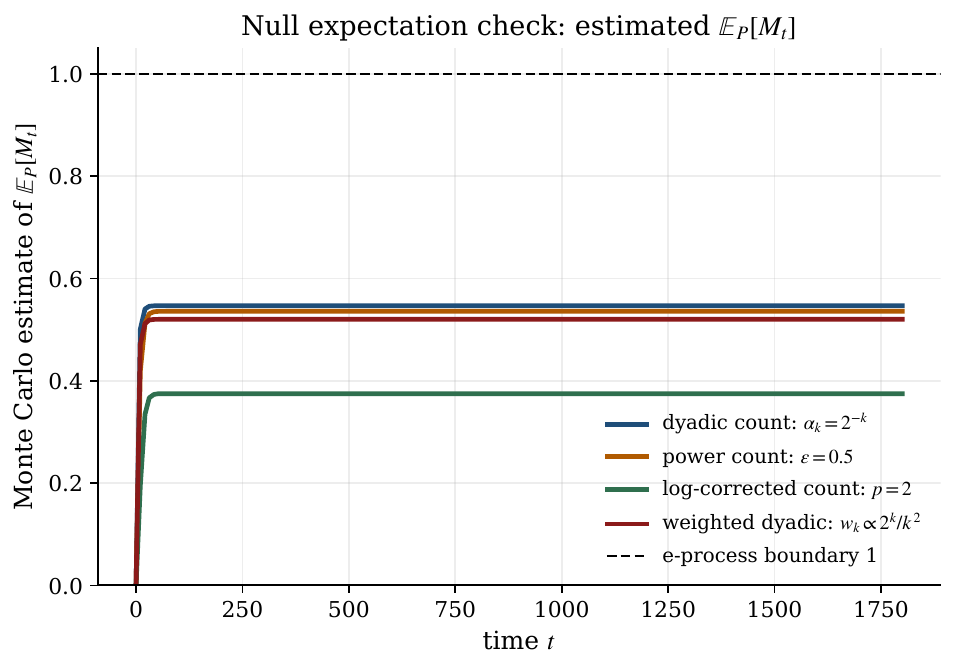}
        \caption{Estimated null expectations \(\EE_P[M_t]\) at deterministic times. The dashed line is the e-process boundary 1.}
        \label{fig:null-fixed}
    \end{subfigure}
    \hfill
    \begin{subfigure}[b]{0.48\textwidth}
        \centering
        \includegraphics[width=\textwidth]{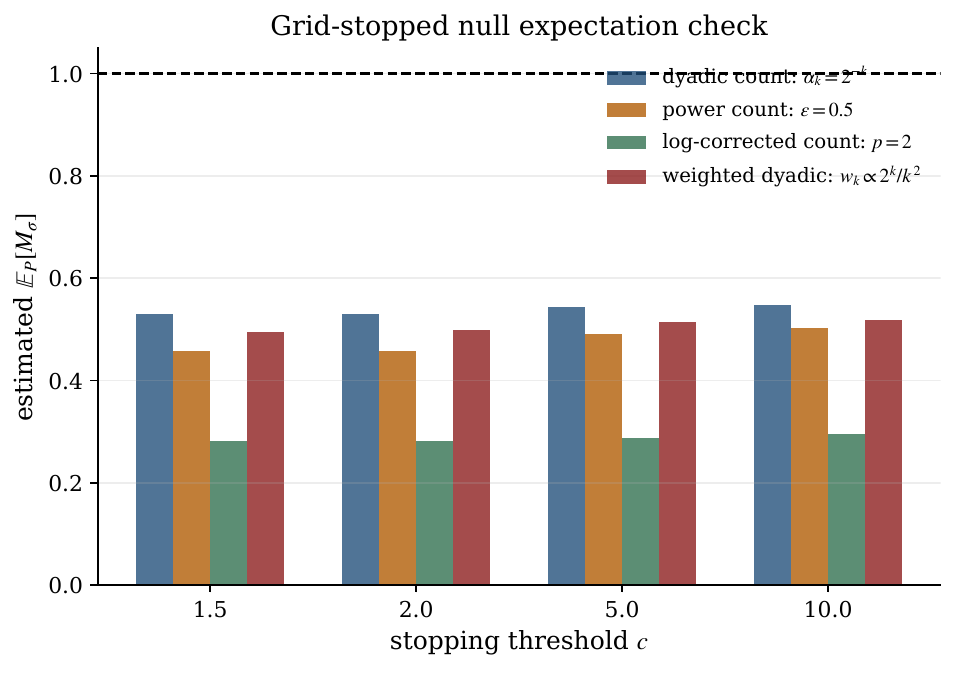}
        \caption{Estimated stopped null expectations for grid-defined stopping rules. Values remain at or below 1 up to Monte Carlo noise.}
        \label{fig:null-stopped}
    \end{subfigure}
    \caption{Validation of null properties: fixed-time expectations (a) and stopped expectations (b).}
    \label{fig:null-checks-combined}
\end{figure}

\FloatBarrier

\subsection*{Experiment 7: Robustness to different $\KL$ values}
In this experiment, we test if the weighted dyadic full-rate aggregate looks similar across different values of $\mu$ after normalizing the vertical axis by $I=\mu^2/2$ and plotting against the information time, i.e.\ $It$. We use an information horizon of $750$, $\mu\in\{0.5,0.8,1.0, 1.25\}$ with $N=6000$ paths per value. The corresponding time horizons to all these are $6000, 2344, 1500, 960$ respectively with $180$ grid points each.

As we can see in Figure~\ref{fig:kl-scaling}, all the normalized curves move toward the unit line on a common information-time scale. This corroborates the fact that $I$ determines the asymptotic growth (once the profile exponent is held constant at $\rho=1$).

\begin{figure}[h]
\centering
\includegraphics[width=0.7\textwidth]{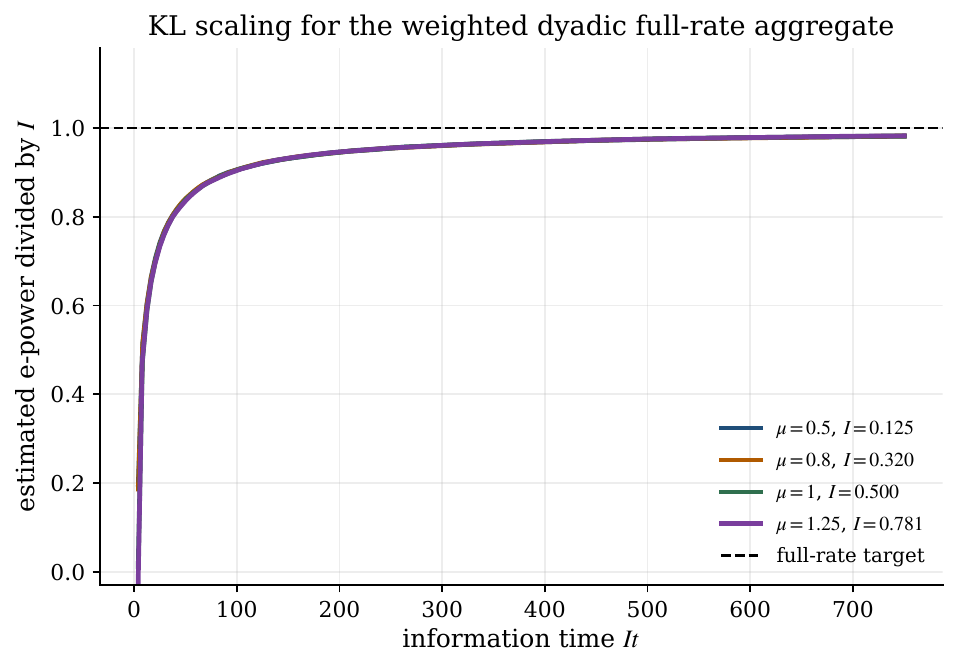}
\caption{KL scaling for the weighted dyadic full-rate aggregate. The vertical axis is e-power divided by \(I\); the horizontal axis is information time \(It\).}
\label{fig:kl-scaling}
\end{figure}

\subsection*{Experiment 8: What happens if we deliberately delay the tests?}
Our theorem assumes that the base tests are asymptotically optimal. However, if each rejection is delayed by some factor $\gamma> 1$, we preserve validity while losing alternative speed. Thus, the expected rate for a full-profile will become $I/\gamma$. For this, we use $10000$ alternative paths, a time grid through $T=2500$, and delays with $\gamma\in\{1,1.25,1.5,2\}$. We evaluate the weighted dyadic profile at $H_{\lfloor t/\gamma\rfloor}$.

In this Figure~\ref{fig:delayed}, we see that the delayed curves align with the predicted horizontal levels $I/\gamma$. Importantly, this shows us that while aggregation can optimize the allocation of levels and weights, it cannot ``repair'' base tests that are slow (i.e.\ suboptimal).

\begin{figure}[h]
\centering
\includegraphics[width=0.7\textwidth]{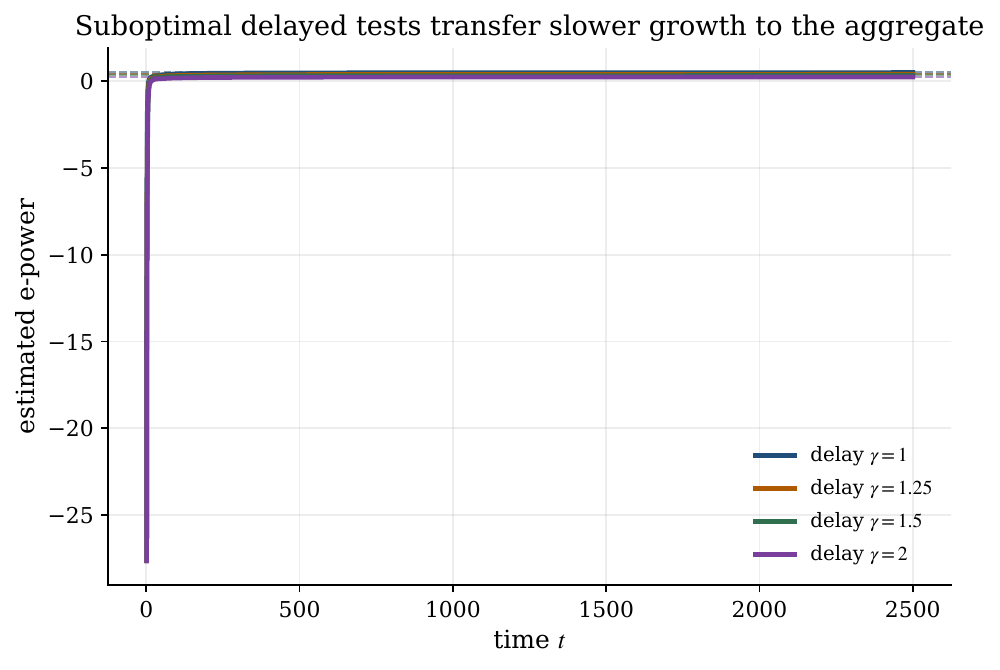}
\caption{Delayed tests transfer slower base-test speed into slower aggregate $e$-power. Dashed lines indicate \(I/\gamma\).}
\label{fig:delayed}
\end{figure}

\FloatBarrier

\subsection*{Experiment 9: Empirically validating that expectation optimality is not enough}
In Example~\ref{ex:expectation-too-weak}, we use a random multiplier $Y\in\{1/2,3/2\}$, each with probability $1/2$ and set $\tau_k\approx Yb_k/I$. In this case, since $\EE[Y]=1$, the hitting times are expectation-optimal (crudely). However, the pathwise growth becomes $M_t=W(It/Y)$ and
\[
\frac1t \log M_t\longrightarrow \frac{\rho I}{Y}.
\]
In the full-rate case, the limiting branches (as opposed to $I=1/2$ only) are $2I=1$ and $(2/3)I=1/3$. Our branch plot is deterministic; that is, the histogram samples 30000 independent values of $Y$ and then evaluates the final-time rates at $T=2500$.

In this two-branch behavior as in Figure~\ref{fig:weak-optimality-combined}, we empirically confirm that expectation (weak) optimality does not give the tail control needed for a deterministic growth rate of the aggregate. One would need something akin to pathwise optimality or nested-in probability control to avoid such situations.

\begin{figure}[h]
    \centering
    \begin{subfigure}[b]{0.48\textwidth}
        \centering
        \includegraphics[width=\textwidth]{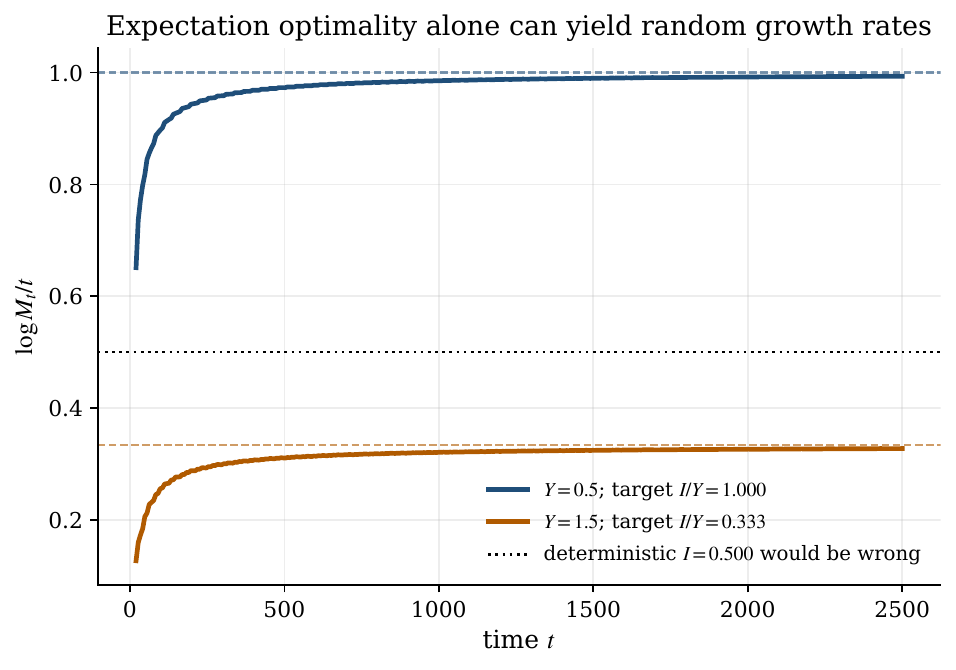}
        \caption{Two deterministic branches in the weak-optimality counterexample. The deterministic line \(I=0.5\) is not the limiting rate on either branch.}
        \label{fig:weak-branches}
    \end{subfigure}
    \hfill
    \begin{subfigure}[b]{0.48\textwidth}
        \centering
        \includegraphics[width=\textwidth]{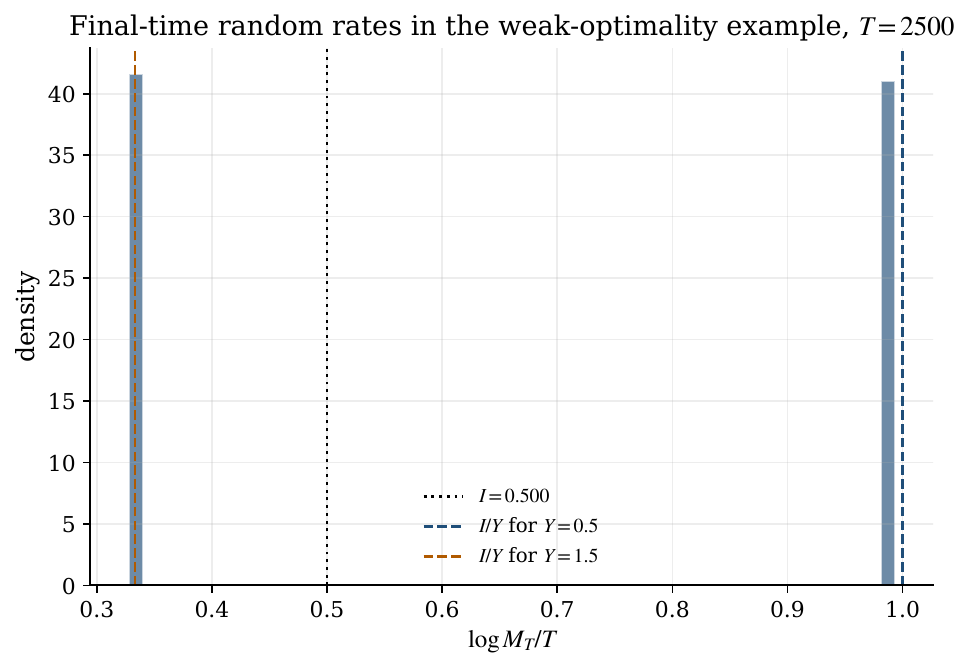}
        \caption{Final-time distribution of \(\log M_T/T\) in our weak-optimality counterexample. The separated modes reflect the random multiplier \(Y\).}
        \label{fig:weak-hist}
    \end{subfigure}
    \caption{Analysis of the weak-optimality counterexample: deterministic branches (a) and final-time distribution (b).}
    \label{fig:weak-optimality-combined}
\end{figure}

\FloatBarrier

\subsection*{Experiment 10: Capital-Budget and truncation analysis}
In this experiment, we recall that validity comes from the capital constraint $\sum_{k}w_k\alpha_k\le 1$. Thus, we plot partial budgets $\sum_{k\le K}w_k\alpha_k$ for several representative schedules. In doing so, we show the speed at which each schedule subsumes the budget. Specifically, our deterministic truncation index ranges geometrically up to $K=200000$. Both the dyadic and weighted dyadic budgets are exact. However, for the log-corrected and iterated-log schedules, we use finite sums and integral-tail upper bounds for choosing normalization constants (that are conservative).

As we can see, all the curves remain below the unit budget. While the logarithmic schedules depend on long tails, the dyadic ones take up capital very quickly. Thus, this result can be very useful for implementations as each finite code path will truncate or approximate an infinite schedule.

\FloatBarrier

\begin{figure}[h]
\centering
\includegraphics[width=0.4\textwidth]{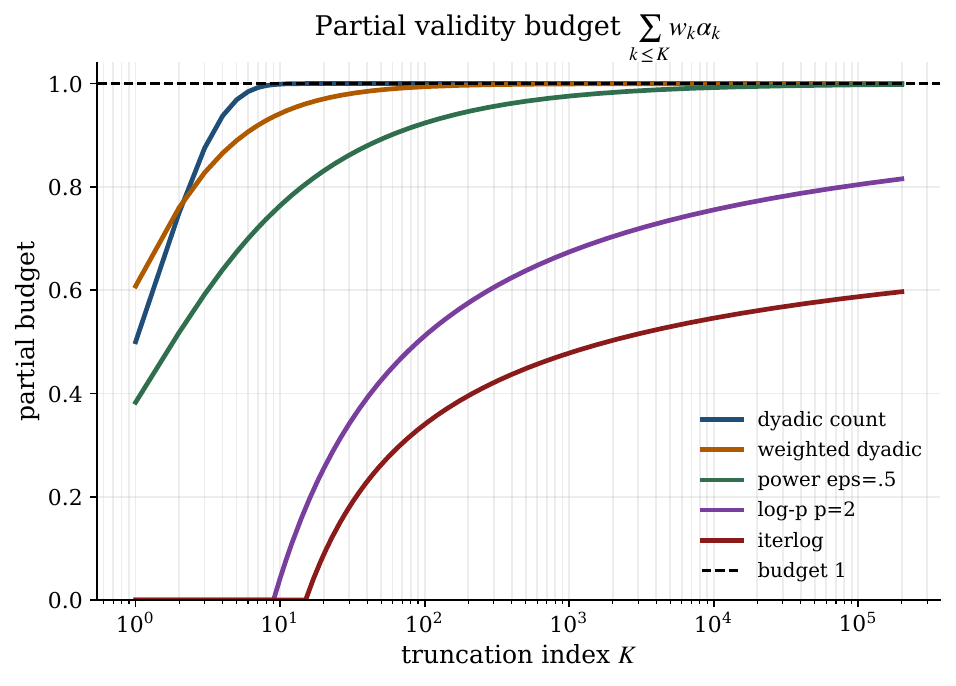}
\caption{Partial validity budgets \(\sum_{k\le K}w_k\alpha_k\). All curves stay at or below 1.}
\label{fig:capital-budget}
\end{figure}

\FloatBarrier

\end{document}